\def\cocoa{{\hbox{\rm C\kern-.13em o\kern-.07em C\kern-.13em o\kern-.15em A}}}
\DeclareMathOperator{\Hom}{Hom}
\DeclareMathOperator{\ann}{ann}
\DeclareMathOperator{\Soc}{Soc}
\newtheorem{theorem}{Theorem}[section]
\newtheorem{lemma}[theorem]{Lemma}
\newtheorem{proposition}[theorem]{Proposition}
\newtheorem{corollary}[theorem]{Corollary}
\theoremstyle{definition}
\newtheorem{definition}[theorem]{Definition} 
\newtheorem{remark}[theorem]{Remark}
\newtheorem{example}[theorem]{Example}
\definecolor{MyDarkGreen}{cmyk}{0.7,0,1,0}
\definecolor{orange}{cmyk}{0,0.5,0.7,0}
\numberwithin{equation}{section}
\title[On the Weak Lefschetz Property for Artinian Gorenstein Algebras]{On the Weak Lefschetz Property for Artinian Gorenstein Algebras of Codimension Three}
\author[M.\  Boij]{Mats Boij${}^{1}$}
\address{Department of Mathematics, KTH - Royal Institute of Technology, S-100 44 Stockholm, Sweden}
\curraddr{}
\email{boij@kth.se}
\author[J.\ Migliore]{Juan Migliore${}^{2}$}
\address{
  Department of Mathematics, University of Notre Dame, Notre Dame, IN
  46556, USA}
\email{Juan.C.Migliore.1@nd.edu}
\author[R.\ M.\ Mir\'o-Roig]{Rosa M.\ Mir\'o-Roig${}^{3}$}
\address{Facultat de Matem\`atiques, Department d'\`Algebra i
 Geometria, Gran Via des les Corts Catalanes 585, 08007 Barcelona, Spain}
\email{miro@ub.edu}
\author[U.\ Nagel]{Uwe Nagel${}^{4}$}
\address{Department of Mathematics,
University of Kentucky, 715 Patterson Office Tower,
Lexington, KY 40506-0027, USA}
\email{uwe.nagel@uky.edu}
\author[F.\ Zanello]{Fabrizio Zanello}
\address{Department of Mathematical Sciences, Michigan Technological
  University, Houghton, MI 49931, USA {\small and} Department of
  Mathematics, MIT, Cambridge, MA 02139-4307, USA}
\email{zanello@math.mit.edu}
\thanks{\noindent Printed \today \\
${}^{1}$ Part of this work was done while this author was a Research
Member at the MSRI.
\\
${}^{2}$ Part of the work for this paper was done while this
author was sponsored by the National Security Agency under Grant
Numbers H98230-09-1-0031 and H98230-12-1-0204, and by the Simons Foundation under grant \#208579.\\
${}^{3}$  Part of the work for this paper was done while this
author was sponsored by MTM2010-15256.\\
${}^{4}$ Part of the work for this paper was done while this author was
sponsored by the National Security Agency under Grant
Numbers H98230-09-1-0032 and H98230-12-1-0247, and by the Simons Foundation under grant \#208869.
}
\begin{document}

\subjclass[2010]{Primary 13E10, 13H10; Secondary 13D40, 14A10, 14G17, 14N05} 
\keywords{Weak Lefschetz Property, artinian algebra, Gorenstein
  algebra, Hesse configuration}

\begin{abstract}
We study the problem of whether an arbitrary codimension three graded artinian Gorenstein algebra has the Weak Lefschetz Property.  We reduce this problem to checking whether it holds for all compressed Gorenstein algebras of odd socle degree.  In the first open case, namely Hilbert function $(1,3,6,6,3,1)$, we give a complete answer in every characteristic by translating the problem to one of studying geometric aspects of certain morphisms from $\mathbb P^2$ to $\mathbb P^3$, and Hesse configurations in $\mathbb P^2$.

\end{abstract}

\maketitle

\section{Introduction}
 The Weak Lefschetz Property (WLP) for an artinian graded algebra $A$
 over a field $k$ simply says that there exists a linear form $L$ that
 induces, for each $i$, a multiplication $\times L : [A]_i
 \longrightarrow [A]_{i+1}$ that has maximal rank, i.e. that is either
 injective or surjective.  At first glance this might seem to be a
 simple problem of linear algebra, but instead it has proven to be
 extremely elusive, even in the case of very natural families of
 algebras.  Many authors have studied these problems from many
 different points of view, applying tools from representation theory,
 topology, vector bundle theory, plane partitions, splines,
 differential geometry, among others (cf. \cite{BK,GIV,HSS,HMMNWW,KRV,KV,MMO,St3}). The role of the characteristic of $k$ in this problem has also been an important, if only superficially understood, aspect of these studies.  

One of the most interesting open problems in this field is whether all
codimension $3$ graded artinian Gorenstein algebras have the WLP in
characteristic zero.  In the special case of codimension $3$ complete intersections, a positive answer was obtained in characteristic zero in \cite{HMNW} using the Grauert-M\"ulich theorem.  For positive characteristic, on the other hand, only the case of monomial complete intersections has been studied (cf. \cite{BK2,CGJL,Cook,CN1,CN2,LZ}), applying many different approaches from combinatorics.  

For the case of codimension $3$ Gorenstein algebras that are not necessarily complete intersections, it is known that for each possible Hilbert function an example exists having the WLP (in \cite{harima} this is proved in a much more general setting).  Some partial results are given in \cite{MZ} to show that for certain Hilbert functions, all such Gorenstein algebras do have the WLP.  But the general case remains completely open.

The present paper has two goals.  First, we give a  reduction of this problem to a very specific type of Gorenstein algebra.  That is, we show that in order to prove that the WLP holds for {\em all} artinian Gorenstein algebras of codimension $3$, it is enough to prove that it holds for all compressed artinian Gorenstein algebras of odd socle degree; that is, for  artinian Gorenstein algebras  having maximal Hilbert function
\[\left(
\textstyle 1,3,6,\dots,\binom{t-1}2,\binom{t+2}{2}, \binom{t+2}{2},\binom{t-1}2, \dots, 6, 3, 1
\right).
\]
The first case, $(1,3,3,1)$, follows from a result in \cite{MZ}.

The second goal of this paper is to show that a solution of the first
open case, namely Hilbert function $(1,3,6,6,3,1)$, already involves
subtle geometric properties of certain morphisms between projective
spaces and certain classical configurations of points known as Hesse
configurations.  We use these ideas to show that the WLP always holds
for $(1,3,6,6,3,1)$ in characteristic zero, and that in positive
characteristic the WLP continues to hold except when the
characteristic is $3$ and, after change of variables, the ideal is of
the form $(x^2y, x^2z, y^3, z^3, x^4+y^2z^2)$.  As a corollary we get
that in characteristic zero all artinian Gorenstein algebras of socle
degree at most $5$ also have the Strong Lefschetz Property.

Note that any solution to this problem will necessarily involve
methods that are special to a polynomial ring in three variables (or
from a geometric point of view, a consideration of the projective
plane), since it is well-known that this problem has a negative answer
in four or more variables.  Attempts to solve this problem in the past
have involved many approaches, including the use of theorems of
Macaulay and of Green, the use of syzygy bundles, etc.  The idea of
using a basis for the vector space of forms of initial degree in the
ideal to define a morphism from $\mathbb P^2$ to a suitable projective
space (when this basis defines a base point-free linear system) has
also been used in the past.  In this paper we give a connection
between this latter approach and the existence of very special
configurations (Hesse configurations) of points in the plane in the
first open case, namely $(1,3,6,6,3,1)$.  Our tools are the geometry
of the morphism and its image, as well as liaison theory.  Moreover, as an important ingredient we use the Buchsbaum-Eisenbud
Structure Theorem \cite{BE} which is special to the three variable case. 
It is our belief that our method has much more to give. 
It has recently been established that the failure of WLP is related to
morphisms with unexpected properties \cite{MMO}, which has been used,
e.g., in \cite{CN2} and \cite{GIV}. 
Indeed, although we conjecture that ultimately a positive answer will
arise in characteristic zero, we think that our approach could also
indicate where to look for a counter-example.  Thus we present it not
as a finished package, but as a new tool to attack the general case.


\section{Injectivity and reduction to the compressed case}

Let $k$ be an algebraically closed field. Given a graded artinian algebra $A=S/I$ where $S=k[x_1,x_2,\dots,x_n]$ and $I$ is a homogeneous ideal of $S$,
we denote by $H_A:\mathbb{Z} \longrightarrow \mathbb{Z}$ with $H_A(j)=\dim _k[A]_j$
its Hilbert function. Since $A$ is artinian, its Hilbert function is
captured in its {\em $h$-vector} $h=(h_0,h_1,\dots ,h_e)$ where $h_i=H_A(i)>0$ and $e$ is the last index with this property. The integer $e$ is called the {\em socle degree of} $A$.
We will use the notion \emph{codimension} for $h_1$, even though in some
cases, \emph{embedding dimension} would be a more correct notion. 

Recall that the graded $k$-algebra $A$ is Gorenstein if its socle $\Soc(A)=\{a\in A \mid a\cdot (x_1,x_2,\dots,x_n)=0 \}$ is $1$-dimensional, i.e. $\Soc(A)\cong k(-e)$.  Its $h$-vector is symmetric, i.e. $h_i=h_{e-i}$ for all $i$.

\begin{definition}
Let $A=S/I$ be a graded artinian $k$-algebra. We say that $A$ has the {\em Weak Lefschetz Property} (WLP)
if there is a linear form $L \in [A]_1$ such that, for all
integers $i\ge0$, the multiplication homomorphism
\[
\times L: [A]_{i} \to [A]_{i+1}
\]
has maximal rank, i.e.\ it is injective or surjective.  We say that  $A$ has the \emph{Strong Lefschetz Property} (SLP)
if there is a linear form $L \in [A]_1$ such that for any two indices $i\le j$, the multiplication homomorphism $\times L^{j-i}: [A]_{i} \to [A]_j$ has maximal rank.
\end{definition}

In this section we will exploit an idea from Hausel's proof of the
following theorem. Recall that a \emph{pure $O$-sequence} is the
Hilbert function of a \emph{monomial} artinian level algebra, or
equivalently, the rank function of a finite monomial order ideal  (see
e.g. Stanley's seminal paper \cite{sta1}, where both pure
$O$-sequences and level algebras were  introduced). 

\begin{theorem} \emph{(Hausel \cite{hausel})} Let $h=(h_0,h_1,\dots ,h_e)$ be a pure $O$-sequence. Then the first half of $h$ is differentiable, and $h_i\le h_j$ for any $i\le j\le e-i$. 
\end{theorem}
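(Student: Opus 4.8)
The plan is to derive both conclusions from a single Lefschetz-type injectivity statement and then to establish that injectivity via the idea the authors attribute to Hausel. Because a pure $O$-sequence is the rank function of a monomial order ideal, and monomials form a $k$-basis of the associated algebra independently of the characteristic, the sequence $h$ is a purely combinatorial object; hence it suffices to fix one realization $A=S/I$ as a monomial artinian level algebra of socle degree $e$ over $\mathbb{C}$ and to prove the inequalities there. The statement I would isolate is the following.

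\medskip
\noindent\textbf{Claim.} \emph{There is a linear form $L\in[A]_1$ such that for all $i\le j\le e-i$ the multiplication map $\times L^{\,j-i}\colon [A]_i\to[A]_j$ is injective.}
\medskip

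Granting the Claim, the theorem follows at once. Injectivity of $\times L^{\,j-i}$ forces $h_i=\dim_{\mathbb C}[A]_i\le\dim_{\mathbb C}[A]_j=h_j$, which is exactly the asserted monotonicity for $i\le j\le e-i$ (note $i\le j\le e-i$ already forces $i\le e/2$). For differentiability, observe that when $2i<e$ the Claim gives that $\times L\colon[A]_i\to[A]_{i+1}$ is injective, so from the exact sequence $[A]_i\xrightarrow{\times L}[A]_{i+1}\to[A/LA]_{i+1}\to 0$ we obtain $\dim_{\mathbb C}[A/LA]_{i+1}=h_{i+1}-h_i=(\Delta h)_{i+1}$. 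Since $A/LA\cong S/(I,L)\cong\mathbb C[x_1,\dots,x_{n-1}]/I'$ is standard graded, its Hilbert function is an $O$-sequence by Macaulay's theorem; as it coincides with $\Delta h$ throughout the first half, the first half of $h$ is differentiable.

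The entire difficulty is thus concentrated in the Claim, and the restriction $i\le j\le e-i$ is essential: pure $O$-sequences need not be unimodal when $h_1$ is large, so no naive combinatorial injection can succeed (complementation $m\mapsto M/m$ inside the divisor lattice of a single top monomial $M$ is a bijection between degrees $i$ and $e-i$, but distinct degree-$i$ monomials may be forced onto different top monomials, whereupon their images collide). To produce the injectivity I would, following Hausel, equip $A$ with a Hard Lefschetz / $\mathfrak{sl}_2(\mathbb C)$ structure: realize $A$ inside the cohomology ring of a suitable smooth projective variety attached to the order ideal, so that the grading becomes the weight decomposition centered at $e/2$, multiplication by a linear form $L$ is the raising operator $\mathsf{e}$, and an ample class is the Lefschetz element. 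The classical Hard Lefschetz theorem then yields that $\mathsf{e}^{\,e-2i}\colon[A]_i\to[A]_{e-i}$ is an isomorphism for $i\le e/2$; since for $i\le j\le e-i$ one has the factorization $\times L^{\,e-2i}=\times L^{\,e-i-j}\circ\times L^{\,j-i}$ with the left factor making the composite injective, the inner factor $\times L^{\,j-i}$ is injective, which is the Claim.

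Constructing this geometric model is the step I expect to be hardest, since a general monomial level algebra is not a complete intersection and comes with no obvious smooth projective realization; the delicate points are to exhibit a variety whose cohomology reproduces exactly the graded dimensions $h_i$ in the relevant range and to check that its Kähler class corresponds to an honest degree-one multiplication on $A$, so that the abstract $\mathfrak{sl}_2$ injectivity transfers back to the concrete maps $\times L$. I expect this cohomological bookkeeping, rather than the $\mathfrak{sl}_2$ formalism, to carry the real weight of the argument.
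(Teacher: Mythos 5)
Your reduction of the theorem to the Claim is sound and matches the framework the paper itself uses: injectivity of $\times L^{\,j-i}\colon [A]_i\to[A]_j$ for $i\le j\le e-i$ gives the monotonicity directly, and injectivity of $\times L$ through the first half gives differentiability via $[A/LA]_{i+1}\cong [A]_{i+1}/L[A]_i$ and Macaulay's theorem. The problem is that the Claim \emph{is} the theorem, and your proposed route to it --- realizing $A$ itself inside the cohomology ring of a single smooth projective variety with the grading as a weight decomposition centered at $e/2$ --- is left entirely open and is not how the argument goes. Indeed it cannot go that way for $A$ itself: pure $O$-sequences are in general neither symmetric nor unimodal, so $A$ admits no Hard Lefschetz structure and no ``raising operator'' $\mathsf e$ with $\mathsf e^{\,e-2i}\colon [A]_i\to[A]_{e-i}$ an isomorphism. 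The step you flag as hardest is exactly the content of Hausel's proof, and it has a concrete, elementary answer that your proposal is missing.

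The missing idea is a decomposition via Macaulay duality. Since $A=S/I$ is a monomial level algebra of socle degree $e$ and type $t$, its inverse system is generated by the $t$ monomials $m_1,\dots,m_t$ of degree $e$ in the order ideal, so $I=\bigcap_{a=1}^{t}\ann(\langle m_a\rangle)$ and $A$ embeds into $\prod_{a=1}^{t}S/\ann(\langle m_a\rangle)$. Each factor $S/\ann(\langle m_a\rangle)$ is a \emph{monomial complete intersection} $k[x_1,\dots,x_n]/(x_1^{a_1+1},\dots,x_n^{a_n+1})$ of socle degree $e$, and these are known to have the SLP in characteristic zero (Stanley, Watanabe) --- this is where Hard Lefschetz legitimately enters, applied to $\mathbb P^{a_1}\times\cdots\times\mathbb P^{a_n}$, not to any variety attached to $A$. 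Since each factor is Gorenstein with symmetric unimodal $h$-vector of socle degree $e$, the SLP gives injectivity of $\times L^{\,j-i}$ on each factor for $i\le j\le e-i$, and the paper's Remark~\ref{injle} (a two-line linear-algebra observation about intersections of subspaces) transfers this injectivity to $A=S/\bigcap_a \ann(\langle m_a\rangle)$. Your own parenthetical about complementation $m\mapsto M/m$ ``inside the divisor lattice of a single top monomial'' shows you were circling this structure combinatorially; the fix for the collisions you worry about is precisely to pass from a set-theoretic injection to the algebra embedding $A\hookrightarrow\prod_a S/\ann(\langle m_a\rangle)$ and argue with linear maps. Without this decomposition your proof does not close.
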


The idea is that by Macaulay duality, the ideal $I$ of an artinian level algebra of type
$t$ is
the intersection of Gorenstein ideals $I_1,I_2,\dots,I_t$, and that we have an injective
homomorphism 
$$
p\colon S/I \longrightarrow \prod_{i=1}^t S/I_i.
$$
If the multiplication by a form $f$ on each factor $S/I_i$ is
injective, then so is the multiplication on $S/I$. In Hausel's case,
the Gorenstein algebras are monomial complete intersections and they
all satisfy the SLP, so powers of linear forms give injective
multiplication in the appropriate degrees on the monomial level
algebra.

Since we will use this idea extensively, we formalize it into the
following remark. 
\begin{remark} \label{injle} Let $V$ and $W$ be vector spaces over the same field $k$, and let $V_1,\dots,V_t$ and $W_1,\dots,W_t$ be subspaces of $V$ and $W$, respectively. Set $\widetilde{V}=\bigcap_{i=1}^t V_i$ and $\widetilde{W}=\bigcap_{i=1}^t W_i$. Suppose there exists a vector space homomorphism $\phi: V \longrightarrow W$ such that all of the induced homomorphisms $\phi_i: V/V_i \longrightarrow W/W_i$ are well-defined and injective. Then the induced homomorphism $\widetilde{\phi}: V/\widetilde{V} \longrightarrow W/\widetilde{W}$ is also well-defined and injective.
\end{remark}

In this section we will use Remark~\ref{injle} to prove that in order for all
Gorenstein algebras of codimension $3$ to enjoy the WLP, it suffices to
prove the property for those that are \emph{compressed} and of
\emph{odd socle degree}.  

As we will see below, Remark~\ref{injle} also  connects, in a new way, the Weak and the Strong Lefschetz Properties with some of the  combinatorial  features of a Hilbert function, such as unimodality, differentiability, and flawlessness. 

For instance, we will show that, if the WLP holds for all codimension $3$ Gorenstein algebras, then all  {level} Hilbert functions of the same codimension are \emph{differentiable} throughout their first half (i.e., their first difference is an $O$-sequence). If, moreover, all Gorenstein algebras enjoy the SLP (in fact, a bit less), then all level Hilbert functions $h=(1,h_1,\dots,h_e)$ satisfy the inequalities $h_i\le h_j$, for all indices $i\le j\le e-i$. In particular, they are \emph{flawless}; i.e., $h_i\le h_{e-i}$ for all $i\le e/2$.

Next, we  discuss some of the consequences of Remark~\ref{injle} that concern level and Gorenstein algebras. The first of these applications greatly simplifies the study of the WLP in the codimension $3$ Gorenstein case, which is the main topic of this paper. 

Recall that an artinian level algebra $A=S/I$ of socle degree $e$ and type $t$ is said to be \emph{compressed} if its Hilbert function is given by:
\[
h_A (i) = \min \{ \dim_k S_i, t \cdot \dim_k S_{e-i} \}.
\] 

In other words, the Hilbert function of $A$ is as large as possible in each degree, for it grows maximally both from the right and from the left. While it is not hard to see that the displayed formula for $h_A$ is an upper-bound, it is nontrivial to show that the bound is actually achieved. In fact, more is true: there exists an irreducible parameter space for which that bound is achieved on a (nonempty) Zariski-open subset.

Compressed algebras were first defined and studied (in a slightly more general context than level algebras) by A. Iarrobino \cite{Iarrobino-compr}, using \emph{Macaulay's inverse systems} (we refer to \cite{Ge,IK} for details on the theory of inverse systems). See also \cite{FL}, where R. Fr\"oberg and D. Laksov reproved some of Iarrobino's results by means of a direct approach, and the fifth author's \cite{Za1,Za2}, for a  generalization of the concept to arbitrary artinian algebras.

We shall also  need the following lemma. In its proof, $\langle
H\rangle$ will  denote the {inverse system module} generated by a
homogeneous form $H\in E=\bigoplus_{i=0}^\infty \Hom_k(S_i,k)$, and $\ann(\langle H\rangle)$ the annihilator of $\langle H\rangle$ in $S=k[x_1,\dots,x_r]$. 

\begin{lemma}\label{lemma:odd} Fix $e\ge 3$ odd. If some  Gorenstein algebra of socle degree $e$ fails  the WLP, then the same is true for some \emph{compressed} Gorenstein algebra of socle degree $e$.
\end{lemma}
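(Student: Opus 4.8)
The plan is to work throughout on the Macaulay-dual side. Write $e = 2m+1$ and realize the algebra as $A = S/\ann(\langle F\rangle)$ for a single form $F \in E_e$, so that $H_A(i) = \dim_k(S_i\circ F)$ and, for $i\le m$, the algebra is \emph{compressed} precisely when the catalecticant $S_i \to E_{e-i}$, $a\mapsto a\circ F$, is injective, i.e.\ $(\ann\langle F\rangle)_i = 0$ for all $i\le m$. First I would record two reductions of the WLP itself. Since $e$ is odd, the Gorenstein pairing $[A]_i\times[A]_{e-i}\to[A]_e\cong k$ is perfect and identifies $\times L\colon[A]_i\to[A]_{i+1}$ with the transpose of $\times L\colon[A]_{e-1-i}\to[A]_{e-i}$; together with $H_A(i)=H_A(e-i)$ this shows that $A$ has the WLP (for a fixed general $L$) if and only if $\times L$ has maximal rank in degrees $i=0,\dots,m$. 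When $A$ is compressed the maps in degrees $i<m$ are simply $\times L\colon S_i\to S_{i+1}$, hence automatically injective, so a \emph{compressed} $A$ has the WLP if and only if the single middle map $\times L\colon[A]_m\to[A]_{m+1}$ is bijective; as $H_A(m)=H_A(m+1)$, this is a square map.

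Next I would translate failure into apolarity. A direct computation using $(\ann\langle F\rangle)_i\subseteq(\ann\langle L\circ F\rangle)_i$ gives
\[
\ker\bigl(\times L\colon[A]_i\to[A]_{i+1}\bigr)\;\cong\;(\ann\langle L\circ F\rangle)_i\big/(\ann\langle F\rangle)_i ,
\]
so $A$ fails the WLP exactly when, for \emph{every} linear form $L$, there is a degree $i\le m$ with $(\ann\langle L\circ F\rangle)_i\supsetneq(\ann\langle F\rangle)_i$. For a \emph{compressed} algebra this collapses to the statement that the middle catalecticant of the degree-$2m$ form $L\circ F$ is singular for every $L$; equivalently, the net of first polars $\{L\circ F\}$ of $F$ is contained in the catalecticant hypersurface $\{\det\mathrm{Cat}_m=0\}\subset\mathbb P(E_{2m})$.

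With these reformulations in hand, the strategy is an extremal one. Parametrizing socle-degree-$e$ Gorenstein algebras by $\mathbb P(E_e)$, the locus where the WLP \emph{holds} is open (the maximal-rank condition, pushed forward along the projection that forgets $L$) and the locus of \emph{compressed} algebras is open and dense, so the failure locus $Z$ is closed; by hypothesis $Z\neq\emptyset$. Since the Hilbert functions of socle-degree-$e$ Gorenstein algebras form a finite poset, I would choose $F_1\in Z$ whose Hilbert function $H_A$ is maximal, and claim that $A_1=S/\ann\langle F_1\rangle$ is then compressed — which is exactly the assertion of the lemma. To prove the claim I would argue by contradiction: if $A_1$ is not compressed, let $j^\ast\le m$ be least with $(\ann\langle F_1\rangle)_{j^\ast}\neq0$, and produce a deformation $F_1\rightsquigarrow F_1+\varepsilon G$ that stays in $Z$ while strictly raising the value $H_A(j^\ast)$, contradicting maximality; iterating lands on a compressed member of $Z$.

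The heart of the matter — and the step I expect to be the main obstacle — is producing that Hilbert-function-increasing deformation \emph{inside} $Z$. The difficulty is structural: maximal rank is the generic behaviour, so a \emph{general} enlargement of the inverse system restores the WLP (indeed $F_1+\varepsilon G$ with $G$ general is compressed \emph{and} has the WLP), and the failure locus is approached only along special directions. One therefore cannot deform generically; instead I would compute the Zariski tangent space to $Z$ at $F_1$ through the defining conditions $(\ann\langle L\circ F\rangle)_i\supseteq(\ann\langle F\rangle)_i$ (for compressed members, the vanishing in the coefficients of $L$ of $\det\mathrm{Cat}_m(L\circ F)$), and exhibit a tangent direction $G$ that preserves the strict containment responsible for the failure while increasing the rank of the deficient catalecticant in degree $j^\ast$. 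Establishing that such a direction always exists — equivalently, that no maximal failing family can be entirely non-compressed — is the crux; here the oddness of $e$, which makes the governing middle catalecticant \emph{square}, is exactly what keeps the failure condition a genuine (non-automatic) closed condition and makes the extremal argument meaningful.
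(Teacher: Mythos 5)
Your reduction to the middle multiplication $\times L\colon [A]_d\to[A]_{d+1}$ (for $e=2d+1$) and your overall plan --- enlarge the inverse system of $F$ until the algebra is compressed, while preserving the failure of WLP --- are exactly the paper's starting point. But the proposal has a genuine gap precisely where you flag it: you never construct the deformation $F\rightsquigarrow F+\varepsilon G$ that raises the Hilbert function while keeping the WLP failure, and that construction \emph{is} the entire content of the lemma. The surrounding extremal machinery does not help close it (and is not needed); moreover the closedness of the failure locus $Z$, which you invoke, is itself not clear, since ``maximal rank'' compares a lower-semicontinuous rank against the lower-semicontinuous dimensions $h_d,h_{d+1}$, all of which can jump under specialization.

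The missing step is handled in the paper by a short quantitative argument, and your stated reason for expecting it to be hard --- that a general enlargement of the inverse system restores the WLP --- is the wrong instinct. One does not perturb by a general element of $E_e$ but by a general element of a small family: by Iarrobino's theorem there are $H_1,\dots,H_s\in E_e$ with $s=\dim_k S_d-h_d$ (one may take powers of general linear forms) such that $G=F+\sum_{i=1}^s H_i$ defines a \emph{compressed} algebra $B$ with $\dim_k B_i=\min\{\dim_k S_i,\,h_i+s\}$ for $i\le d$. The key observation is that the rank of $\times L\colon B_d\to B_{d+1}$ exceeds the rank of $\times L\colon A_d\to A_{d+1}$ by at most $s$: that rank equals $\dim_k S_d\circ(L\circ G)$, which is contained in $S_d\circ(L\circ F)+\sum_i S_d\circ(L\circ H_i)$, and each summand $S_d\circ(L\circ H_i)$ is at most one-dimensional when $H_i$ is a power of a linear form. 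Since $A$ fails the WLP, the rank on $A$ is at most $h_d-1$, hence the rank on $B$ is at most $h_d-1+s=\dim_k S_d-1<\dim_k B_d$, so $B$ is a compressed Gorenstein algebra of socle degree $e$ that still fails the WLP. This one-step construction replaces your maximality argument, tangent-space computation, and iteration entirely.
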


\begin{proof}
Let $F\in E$ be an element of odd degree $e=2d+1$. We will show that, if
$A=S/\ann(\langle F\rangle )$ does not satisfy the WLP, then there
exists an element $G$ of degree $e$ such that $B=S/\ann(\langle G\rangle)$
is compressed and does not satisfy the WLP. Suppose the Hilbert
function of $A$ is $(1,h_1,\dots,h_{2d+1})$. In order to verify
whether or not the WLP holds for $A$, it suffices to check the
injectivity of the homomorphism $\times L \colon A_d\longrightarrow A_{d+1}$, for a general linear form $L$. 

By \cite{Iarrobino-compr}, there exist elements $H_1,H_2,\dots,H_s\in
E$ of degree $e$ such that if $G=F+\sum_{i=1}^s H_i$ and $I=\ann(\langle G
\rangle)$, then $B=S/I$ has Hilbert function
$\min\{\dim_kS_i,h_i+s\}$, for $i=0,1,\dots,d$. If $s=\dim_kS_d-h_d$,
we obtain that $B$ is a compressed Gorenstein algebra. On the other
hand, the rank of the homomorphism $\times L\colon B_d\longrightarrow B_{d+1}$
is at most $s$ higher than the rank of $\times L\colon A_d\longrightarrow A_{d+1}$. Thus, if $A$ fails to have the WLP, so does $B$.
\end{proof}

\begin{corollary}\label{reductiontocompressedodd}
Let $r$ and $e$ be positive integers,  with $e$ even. If all artinian \emph{compressed} Gorenstein algebras of codimension $\le r$ and socle degree $e-1$  have the WLP, then all artinian Gorenstein  algebras of codimension $\le r$ and socle degree $e$ have the WLP.

In particular, since all codimension $2$ algebras have the WLP (see  \cite{HMNW,MZ0}), if {all} codimension $3$ artinian \emph{compressed} Gorenstein  algebras \emph{of odd socle degree} have the WLP, then \emph{all} codimension $3$ artinian Gorenstein  algebras  have the WLP.
\end{corollary}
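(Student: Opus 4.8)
The plan is to prove the quantified (first) statement and then obtain the codimension three assertion by running it over all socle degrees. Throughout write $S=k[x_1,\dots,x_r]$, $\mathfrak m=(x_1,\dots,x_r)$, and fix a Gorenstein algebra $A=S/I$ of codimension $\le r$ and even socle degree $e=2m$; by Macaulay duality $I=\ann(\langle F\rangle)$ for a form $F$ of degree $e$. The first move is to reduce the entire WLP of $A$ to a single map: it suffices to show that, for a general linear form $L$, the multiplication $\times L\colon [A]_{m-1}\to[A]_m$ is injective. Indeed, $A/(L)$ is standard graded, so if $\times L\colon[A]_i\to[A]_{i+1}$ is surjective then $[A/(L)]_{i+1}=0$ and hence $[A/(L)]_j=0$ for all $j\ge i+1$; that is, surjectivity of $\times L$ propagates upward. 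Dualizing through the nondegenerate Gorenstein pairing $[A]_i\times[A]_{e-i}\to[A]_e\cong k$, which identifies $\times L\colon[A]_i\to[A]_{i+1}$ with the transpose of $\times L\colon[A]_{e-i-1}\to[A]_{e-i}$, injectivity correspondingly propagates downward. Thus injectivity at the spot $m-1\to m$ yields injectivity for all $i\le m-1$ and, by duality, surjectivity for all $i\ge m$; since an injection forces $h_i\le h_{i+1}$, the Hilbert function $h_A$ is automatically non-decreasing up to degree $m$ and, by symmetry, non-increasing afterwards, so every map has maximal rank and $A$ has the WLP. No unimodality need be assumed a priori.

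It remains to produce the injectivity of $\times L\colon[A]_{m-1}\to[A]_m$, and this is where Remark~\ref{injle} enters. I choose a basis $\ell_1,\dots,\ell_r$ of $[S]_1$ and set $I_j=(I:\ell_j)$, $B_j=S/I_j$. Each $I_j$ is again the annihilator of a principal inverse system (the one obtained from $F$ by applying $\ell_j$), so $B_j$ is Gorenstein; it has codimension $\le r$, being a quotient of $A$, and its socle degree is exactly $e-1=2m-1$, which is \emph{odd}, because the top multiplication $\times\ell_j\colon[A]_{e-1}\to[A]_e\cong k$ is surjective by nondegeneracy of the Gorenstein pairing. Since the socle of $A$ sits in degree $e$, the intersection $\bigcap_{j=1}^r I_j=(I:\mathfrak m)$ coincides with $I$ in every degree below $e$; in particular $\bigcap_j (I_j)_{m-1}=I_{m-1}$ and $\bigcap_j (I_j)_m=I_m$. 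I then apply Remark~\ref{injle} with $V=[S]_{m-1}$, $W=[S]_m$, subspaces $V_j=(I_j)_{m-1}$ and $W_j=(I_j)_m$, and $\phi=\times L$: here $V/V_j=[B_j]_{m-1}$, $W/W_j=[B_j]_m$, $V/\widetilde V=[A]_{m-1}$, $W/\widetilde W=[A]_m$, and $\phi_j$ is precisely multiplication by $L$ on $B_j$.

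The hypothesis now feeds in through the factors $B_j$. Because all \emph{compressed} Gorenstein algebras of codimension $\le r$ and socle degree $e-1$ have the WLP, the contrapositive of Lemma~\ref{lemma:odd} shows that \emph{all} Gorenstein algebras of codimension $\le r$ and socle degree $e-1$ have it (the case $e-1=1$ being trivial). As $e-1=2m-1$ is odd, symmetry gives $h_{m-1}(B_j)=h_m(B_j)$, so the WLP of $B_j$ makes $\times L\colon[B_j]_{m-1}\to[B_j]_m$ an isomorphism, in particular injective, for every $L$ in a dense open set; intersecting the finitely many such sets, a single general $L$ works simultaneously for all $j$. Remark~\ref{injle} then yields that $\widetilde\phi=\times L\colon[A]_{m-1}\to[A]_m$ is injective for this $L$, which by the first paragraph gives the WLP for $A$. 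This transfer---verifying that the colon ideals $(I:\ell_j)$ cut out Gorenstein factors of socle degree exactly $e-1$ and that their intersection recovers $I$ below the socle, so that Remark~\ref{injle} genuinely applies---is the technical heart of the argument and the step I expect to demand the most care.

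Finally, the codimension three statement follows by assembling this over all socle degrees. Assuming all compressed codimension three Gorenstein algebras of odd socle degree have the WLP, Lemma~\ref{lemma:odd} gives the property directly for all codimension three Gorenstein algebras of odd socle degree, and the first statement (applied with $r=3$ to each even $e$, reducing to the already settled odd degree $e-1$) supplies it for every even socle degree. Any factor $B_j$ arising along the way that happens to have codimension at most $2$ is handled by the cited fact that every codimension $\le 2$ algebra has the WLP, which also furnishes the base of the induction. Combining the even and odd cases shows that every codimension three artinian Gorenstein algebra has the WLP.
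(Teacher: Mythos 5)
Your proof is correct and follows essentially the same route as the paper's: both reduce the WLP to the single middle multiplication $[A]_{m-1}\to[A]_m$, decompose the ideal below the socle degree as an intersection of Gorenstein ideals of socle degree $e-1$ via Macaulay duality, and conclude with Remark~\ref{injle} combined with Lemma~\ref{lemma:odd}. Your colon ideals $(I:\ell_j)$ are precisely the Gorenstein components of the paper's truncated level algebra (their dual socle generators $\ell_j\circ F$ span the degree-$(e-1)$ part of the inverse system), so the only differences are presentational: you argue directly rather than by contrapositive, and you spell out the upward/downward propagation of surjectivity and injectivity from the middle degree, which the paper simply asserts.
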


\begin{proof}
By Lemma~\ref{lemma:odd}, it is enough to show that if $A=\bigoplus_{i=0}^eA_i$ is a Gorenstein algebra of codimension $r$ and even socle degree $e$ failing the WLP, then the WLP also fails for some  Gorenstein  algebra of  socle degree $e-1$ and codimension $\le r$. 

The  truncation $B=\bigoplus_{i=0}^{e-1}B_i=S/I$ of $A$ is clearly a
level algebra also failing the WLP. Since $B$ has type
$t=\dim_kB_{e-1}$, we have that $I$ can be expressed as the intersection of $t$ ideals  of $R$, say $I^{(1)},\dots,I^{(t)}$, such that all algebras $B^{(i)}=S/I^{(i)}$ are Gorenstein of socle degree $e-1$ and codimension at most $r$. 

Since the WLP for Gorenstein algebras only needs to be checked in the middle degree, an application of Remark~\ref{injle} now shows that at least one of the $B^{(i)}$ must fail the WLP. This  completes the proof of the corollary.
\end{proof}

The following corollary shows two important consequences of Remark~\ref{injle}, which connect in a new fashion the Lefschetz Properties and the combinatorics of level Hilbert functions. Proving the next two   results unconditionally, for instance for level Hilbert functions of codimension $3$ and characteristic zero, would be extremely interesting.

\begin{corollary}
\begin{itemize}
\item[(1)] Assume all  artinian Gorenstein algebras of socle degree $e$ and codimension $\le r$ enjoy the WLP. Then the first half of any artinian level Hilbert function of socle degree $e$ and codimension $\le r$ is differentiable.
\item[(2)] Assume all artinian Gorenstein algebras of socle degree $e$ and codimension $\le r$ enjoy the SLP. Then any  artinian level Hilbert function  $h=(h_0,h_1,\dots ,h_e)$, with $h_1\le r$, also satisfies the inequalities $h_i\le h_j$, for all $i\le j\le e-i$. In particular, $h$ is flawless.
\end{itemize}
\end{corollary}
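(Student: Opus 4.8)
The plan is to treat both statements as consequences of the Macaulay-duality decomposition of a level algebra together with Remark~\ref{injle}. Let $A=S/I$ be an artinian level algebra of socle degree $e$, codimension $h_1\le r$, and type $t$. By Macaulay's inverse systems, the inverse system of $A$ is generated by $t$ forms $F_1,\dots,F_t$ of degree $e$, so that $I=\bigcap_{\ell=1}^t I_\ell$ with $I_\ell=\ann(\langle F_\ell\rangle)$ and each $A_\ell=S/I_\ell$ Gorenstein of socle degree $e$. Moreover $[I]_1\subseteq[I_\ell]_1$ forces $\dim_k[A_\ell]_1\le\dim_k[A]_1\le r$, so each $A_\ell$ has codimension $\le r$ and the hypothesis applies to it. Since $k$ is infinite and the set of Lefschetz elements of each $A_\ell$ is a nonempty Zariski-open subset of $[S]_1$, I would fix one general linear form $L$ that is simultaneously a (weak, resp.\ strong) Lefschetz element for all $t$ factors. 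The link to Remark~\ref{injle} is then obtained by taking $V=[S]_a$, $W=[S]_b$, $\phi=\times L^{\,b-a}$, $V_\ell=[I_\ell]_a$ and $W_\ell=[I_\ell]_b$: here $V/V_\ell=[A_\ell]_a$ and $W/W_\ell=[A_\ell]_b$, while $\widetilde V=[I]_a$ and $\widetilde W=[I]_b$ give $V/\widetilde V=[A]_a$ and $W/\widetilde W=[A]_b$, so injectivity of each $\times L^{\,b-a}$ on the Gorenstein factors transfers to injectivity of $\times L^{\,b-a}\colon[A]_a\to[A]_b$.

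For part (1) I would apply this with $b=a+1$. On each factor, the existence of a weak Lefschetz element forces the symmetric $h$-vector of $A_\ell$ to be unimodal, hence nondecreasing on its first half; so in every degree $j$ with $2j<e$ the map $\times L\colon[A_\ell]_j\to[A_\ell]_{j+1}$ has maximal rank between spaces of nondecreasing dimension and is therefore injective. Remark~\ref{injle} then yields that $\times L\colon[A]_j\to[A]_{j+1}$ is injective for all such $j$. Consequently the standard graded algebra $\bar A=A/LA=S/(I+(L))$ satisfies $H_{\bar A}(j)=\Delta H_A(j)$ throughout the first half of $h$. Since $\bar A$ is a quotient of a polynomial ring, $H_{\bar A}$ is an $O$-sequence, and any initial segment of an $O$-sequence is again an $O$-sequence; thus $\Delta h$ is an $O$-sequence through the first half, i.e.\ the first half of $h$ is differentiable.

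For part (2) I would instead apply the set-up with arbitrary $a=i\le j=b$ satisfying $j\le e-i$. On each Gorenstein factor the strong Lefschetz element $L$ makes $\times L^{\,e-2i}\colon[A_\ell]_i\to[A_\ell]_{e-i}$ an isomorphism (maximal rank between equidimensional spaces, by symmetry of the $h$-vector), which shows that the $h$-vector of $A_\ell$ is nondecreasing on its first half and hence that $h_{A_\ell}(i)\le h_{A_\ell}(j)$ whenever $i\le j\le e-i$; for such indices the maximal-rank map $\times L^{\,j-i}\colon[A_\ell]_i\to[A_\ell]_j$ is therefore injective. Remark~\ref{injle} transfers this to injectivity of $\times L^{\,j-i}\colon[A]_i\to[A]_j$, giving $h_i\le h_j$ for all $i\le j\le e-i$. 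Taking $j=e-i$ yields $h_i\le h_{e-i}$, i.e.\ flawlessness.

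The routine part is the bookkeeping inside Remark~\ref{injle}; the only genuinely delicate point is converting ``maximal rank'' into the injectivity that the remark requires. This conversion is exactly where symmetry of the Gorenstein $h$-vector and its unimodality (under WLP) or monotonicity on the first half (under SLP) enter, and it is what restricts the conclusions to the first-half range $i\le j\le e-i$. Once one checks that a general $L$ can be chosen to be Lefschetz for all $t$ factors at once, the remainder is a direct application of the decomposition $I=\bigcap_\ell I_\ell$.
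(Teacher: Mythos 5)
Your proposal is correct and follows essentially the same route as the paper: decompose the level ideal as an intersection of $t$ Gorenstein ideals via Macaulay duality, choose one general linear form that is a Lefschetz element for all factors simultaneously, deduce injectivity in the relevant range on each Gorenstein factor from the symmetry (and resulting unimodality/monotonicity on the first half) of its $h$-vector, and transfer that injectivity to $A$ via Remark~\ref{injle}. You merely spell out some details the paper leaves implicit (the identification of the spaces in Remark~\ref{injle} and the $O$-sequence argument for differentiability), so no further comment is needed.
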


\begin{proof}
(1) Let $A=S/I$ be a level algebra having Hilbert function  $h$, where $I$ is the intersection of $t=h_e$   Gorenstein ideals $I^{(1)},\dots,I^{(t)}$. Since all Gorenstein algebras $B^{(a)}=S/I^{(a)}$ have the WLP, if $L\in R$ is a general form, then multiplication by $L$  is injective between any two consecutive degrees of the first half of any of the $B^{(a)}$. Thus, Remark~\ref{injle} easily gives us that the injectivity of  $\times L$ is inherited by $A$ throughout its first half; in particular, $h$ is differentiable in those degrees, as desired.

(2) Since  Gorenstein Hilbert functions are symmetric, if all algebras $B^{(a)}$ have the SLP, then for all indices $i\le j\le e-i$ and $a=1,\dots, t$, and any general linear form $L\in S$, we immediately have that multiplication by $L^{j-i}$ is injective from $B^{(a)}_i$ to $B^{(a)}_j$. The result now similarly follows from Remark~\ref{injle}.
\end{proof}

\begin{remark}
We only remark here that, in the previous corollary, the SLP can in fact be replaced by a weaker but also  natural condition, called the ``Maximal Rank Property'' (see \cite{MM} for details).
\end{remark}

Recall that a numerical sequence is  \emph{unimodal} if it does not strictly increase after a strict decrease.

\begin{example}
We know from  \cite{HMNW} that, when $k$ has characteristic zero,  every artinian complete intersection quotient of $S=k[x,y,z]$ has the WLP. Let $A=S/I$ be a type $t$ level algebra, such that $I$ can be written as the intersection of $t$ complete intersection artinian ideals. Then, from Remark~\ref{injle} and the proof of Corollary~\ref{reductiontocompressedodd}, it easily follows that the Hilbert function $h_A$ of $A$ is \emph{differentiable} throughout its first half. Notice that, moreover, if $h_A$ is {unimodal} with a peak in its middle degree, then by \cite{MMN2}, $A$ also enjoys the WLP.
\end{example}

As a final  illustration of the scope of Remark~\ref{injle}, we record the following result, whose proof is along the same lines as the above, hence will be omitted.

\begin{corollary}\label{uniWLP} Let $A$ be an
  artinian Gorenstein algebra of socle degree $e$ with
  \emph{nonunimodal} Hilbert function. Then the general  Gorenstein quotient of $A$ of socle degree $e-1$ fails the WLP.
\end{corollary}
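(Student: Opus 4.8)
The plan is to realize the general Gorenstein quotient concretely via inverse systems, pass to the truncation of $A$ exactly as in the proof of Corollary~\ref{reductiontocompressedodd}, and then run a genericity argument on top of Remark~\ref{injle} in order to upgrade the conclusion from ``some quotient fails'' to ``the general quotient fails''. Write $A = S/I$ with $I = \ann(\langle F\rangle)$ and $\deg F = e$. By Macaulay duality, a Gorenstein quotient of $A$ of socle degree $e-1$ is exactly $B_L := S/\ann(\langle L\circ F\rangle)$ for a linear form $L$, and $\ann(\langle L\circ F\rangle) = (I:L)$; thus these quotients are parametrized by $L\in\mathbb{P}([S]_1)$, and the \emph{general} one corresponds to a general $L$. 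Throughout I would use the standard fact that an artinian algebra with the WLP has a unimodal Hilbert function, so that it suffices to produce a failure of maximal rank.

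First I would pass to the truncation $A' = \bigoplus_{i=0}^{e-1}[A]_i = S/(I+\mathfrak{m}^e)$, where $\mathfrak{m}=[S]_1 S$. Since $A$ is Gorenstein with $\Soc(A)$ in degree $e$, a direct check shows $\Soc(A')$ is concentrated in degree $e-1$, so $A'$ is level of socle degree $e-1$ and type $t=h_1$, with Hilbert function $(h_0,\dots,h_{e-1})$. Its inverse system is generated in top degree by $\langle F\rangle_{e-1}=[S]_1\circ F$, so a basis $G_1=L_1\circ F,\dots,G_t=L_t\circ F$ of this space gives $I(A')=\bigcap_{a=1}^t \ann(\langle G_a\rangle)$; that is, the Gorenstein components of $A'$ are precisely the quotients $B_{L_a}$. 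Because $h$ is symmetric and nonunimodal, its truncation $(h_0,\dots,h_{e-1})$ is again nonunimodal (the last step $h_{e-1}\to h_e=1$ is non-increasing, hence never the increase witnessing nonunimodality), so $A'$ fails the WLP.

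The crux is to turn this into a statement about the general $B_L$. Suppose, for contradiction, that $B_L$ has the WLP for all $L$ in a dense open set. Choosing $L_1,\dots,L_t$ general, I may then assume that the $G_a=L_a\circ F$ form a basis of $\langle F\rangle_{e-1}$ \emph{and} that every component $B^{(a)}=B_{L_a}$ has the WLP; consequently a single general linear form $L'$ is simultaneously a Weak Lefschetz element for all of $B^{(1)},\dots,B^{(t)}$. Now I would invoke the combinatorial input that a symmetric nonunimodal $h$ must strictly decrease somewhere in its first half: there is an index $m$ with $h_m>h_{m+1}$ and $m+1\le\lfloor e/2\rfloor$ (otherwise $h$ would be nondecreasing on $[0,\lfloor e/2\rfloor]$ and, being symmetric, unimodal). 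For each Gorenstein $B^{(a)}$ of socle degree $e-1$ this transition $m\to m+1$ lies in the weakly increasing first half, where the WLP forces $\times L'\colon [B^{(a)}]_m\to[B^{(a)}]_{m+1}$ to be \emph{injective}. Applying Remark~\ref{injle} with $\phi=\times L'$, $V_a=[\ann\langle G_a\rangle]_m$ and $W_a=[\ann\langle G_a\rangle]_{m+1}$, I conclude that $\times L'\colon [A']_m\to[A']_{m+1}$ is injective, whence $h_m=\dim[A']_m\le\dim[A']_{m+1}=h_{m+1}$, contradicting $h_m>h_{m+1}$. Thus the WLP cannot hold on a dense open set of $L$, and since $\mathbb{P}([S]_1)$ is irreducible the failure locus is dense, so the general $B_L$ fails the WLP.

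The main obstacle is precisely this passage from ``some component fails'' to ``the general quotient fails''. As used in Corollary~\ref{reductiontocompressedodd}, Remark~\ref{injle} only yields one failing Gorenstein component, so the genuine work is twofold: (i) securing a \emph{common} Lefschetz element $L'$ for a \emph{general} basis of dual socle generators of $A'$, which is what lets me feed the hypothesis into Remark~\ref{injle}; and (ii) matching the injectivity that the WLP forces on the components in their first half against a \emph{strict} decrease of $h$ in that same range. Point (ii) is exactly what symmetry-plus-nonunimodality of $h$ guarantees, via the bound $m+1\le\lfloor e/2\rfloor$, which also absorbs the two parities of $e$ (the only effect of parity being the precise location of the middle degree of $B^{(a)}$).
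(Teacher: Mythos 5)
Your proof is correct and takes exactly the route the paper has in mind for this (explicitly omitted) proof: truncate $A$, write the truncation's ideal as the intersection of the annihilators of the dual generators $L\circ F$ as in the proof of Corollary~\ref{reductiontocompressedodd}, and feed the injectivity that the WLP forces on each Gorenstein quotient in its first half into Remark~\ref{injle}, contradicting the strict decrease that symmetry plus nonunimodality locates at some step $m\to m+1$ with $m+1\le\lfloor e/2\rfloor$. The only points you add beyond the paper's sketch --- the parity check at the middle degree of the socle-degree-$(e-1)$ quotients and the passage from ``WLP does not hold generically'' to ``WLP fails generically'' (which tacitly uses that the WLP locus is open where the Hilbert function of $B_L$ is constant, hence empty if not dense) --- are handled correctly.
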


\begin{remark}
\begin{itemize}
\item[(i)] For instance, we know from \cite{MNZ,Za_IP} (see also
  \cite{MNZ1} for a broad generalization) that there exist ``many''
  Gorenstein algebras having a nonunimodal Hilbert function of the
  form $H=(1,r,a,r,1)$, where $a<r$. In fact, as conjectured by
  Stanley \cite{St2}, in \cite{MNZ} three of the present authors
  proved that $\lim_{r\rightarrow \infty} \frac{f(r)}{ r^{2/3}}=
  6^{2/3}$, where $f(r)$ is the least possible value of $a$ such that
  $H$ is Gorenstein; in \cite{Za_IP}, the fifth author then showed
  that $H$ is Gorenstein for all values of
  $a=f(r),\dots,\binom{r+1}{2}$. 

Therefore, by Corollary~\ref{uniWLP}, any Gorenstein algebra with Hilbert function $H$ gives rise to a number of new Gorenstein algebras of socle degree $3$ (and codimension $r' < a$) that do not enjoy the WLP.

\item[(ii)] As the example in part (i) shows, Remark~\ref{injle} also provides a new connection between the WLP and unimodality.  Indeed, in Corollary \ref{uniWLP} we cannot hope to replace the failure of WLP by non-unimodality in general since the above-mentioned Gorenstein algebras of socle degree $3$ have Hilbert function $(1,r',r',1)$.
\end{itemize}
\end{remark}


\section{Hesse configurations and the WLP}

From now on, we let $S=k[x,y,z]$. In \cite{HMNW}, it was shown that, if  the characteristic of $k$ is zero, every complete intersection $A=S/(F_1, F_2, F_3$) has the WLP. (This is false in positive characteristic.) We would like to extend this result and  know whether the same holds for {\em any} codimension $3$ artinian Gorenstein algebra $A=S/I$. It is known that in characteristic zero the WLP holds for a nonempty  open subset of the codimension $3$ artinian Gorenstein graded algebras with fixed  Hilbert function (cf. \cite{harima}); however showing it for {\em all} codimension $3$ artinian Gorenstein graded algebras has proved very elusive so far. Corollary~\ref{reductiontocompressedodd} shows that it is enough to prove it in the case of compressed Gorenstein algebras of odd socle degree.   The goal of this section is to prove the first open nontrivial case; namely, that all codimension $3$ artinian Gorenstein algebras with $h$-vector $(1, 3, 6, 6, 3, 1)$ do have WLP in every characteristic, except only when the characteristic is $3$ and the ideal has a very precisely given form, up to change of variables.

\begin{definition}
A {\em Hesse configuration}  in $\mathbb P^2$ is a complete intersection, $Y$, of two cubics with the property that the line containing any length two subscheme of $Y$ actually meets $Y$ in a length $3$ subscheme.
\end{definition}

\begin{remark} \label{hesse picture}
Note that we do not assume above that a Hesse configuration is reduced. Hesse configurations arise as the set of flex points on a smooth cubic curve over the complex numbers, and also as the affine plane over the field of three elements.    
\end{remark}

\begin{figure}[hbt]
  \centering
  \includegraphics[width=2in,angle=90]{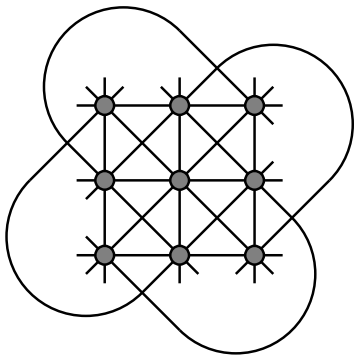}
  \caption{Hesse configuration}
  \label{fig:hesse}
\end{figure}

In our study of Gorenstein algebras $A$ of $h$-vector $(1,3,6,6,3,1)$, we will make use of a length $7$ subscheme $X$ of $\mathbb{P}^2$ defined by a submatrix of the skew-symmetric matrix associated to $A$. We begin with a careful study of these subschemes. 

\begin{proposition} \label{tfae}
  Let $X$ be a zero-dimensional subscheme of length $7$ in $\mathbb
  P^2$ not lying on a conic, i.e., having $h$-vector $(1,2,3,1)$, such
  that $I_X$ has three minimal generators (hence all of degree three).  Let $f$
  be a form of degree $3$ which is a non-zerodivisor on $S/I_X$ and
  let $J=I_X+(f)$. 
  Let
   $$
  M=\begin{bmatrix}
    \ell_1&q_1\\
    \ell_2&q_2\\
    \ell_3&q_3\\
  \end{bmatrix}
  $$
be the Hilbert-Burch matrix for $X$, where 
  $\ell_1,\ell_2,\ell_3$ are linear forms and  $q_1,q_2,q_3$ are quadratic forms. Then the following are equivalent:

\begin{itemize}
\item[(a)]  the forms $\ell_1, \ell_2, \ell_3$ are linearly independent; 

\item[(b)] $X$ does not have any subscheme of length $6$ on a conic;

\item[(c)] $X$ does not contain a subscheme that is a complete intersection of type $(2,3)$; 

\item[(d)] there is a form $g$ of degree $4$ such that
  $J+(g)$ is a Gorenstein ideal with Hilbert function
  $(1,3,6,6,3,1)$.
\end{itemize}
\end{proposition}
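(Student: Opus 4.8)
The plan is to fix the two Hilbert functions that control everything. Since $X$ has $h$-vector $(1,2,3,1)$ we have $H_{S/I_X}=(1,3,6,7,7,\dots)$, and because $f$ is a non-zerodivisor of degree $3$ the sequence $0\to (S/I_X)(-3)\xrightarrow{\ f\ }S/I_X\to S/J\to 0$ yields $H_{S/J}=(1,3,6,6,4,1)$. Thus $S/J$ already realizes the target function in degrees $\le 3$, and the whole point of (d) is to cut the value in degree $4$ from $4$ down to $3$ by a single quartic, landing on the symmetric Gorenstein sequence $(1,3,6,6,3,1)$ of socle degree $5$.

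The technical heart of the equivalences (a)$\Leftrightarrow$(b)$\Leftrightarrow$(c) is a lemma I would isolate: \emph{the forms $\ell_1,\ell_2,\ell_3$ are linearly dependent if and only if $[I_X]_3$ contains two independent cubics sharing a common quadratic factor.} For the forward direction, a constant change among the generators makes $\ell_3=0$, whence the minors $F_1,F_2$ are both divisible by $q_3$. Conversely, if $c\,m_1,c\,m_2\in[I_X]_3$ with $m_1,m_2$ independent linear forms, then $m_2(c\,m_1)-m_1(c\,m_2)=0$ is a linear syzygy with a zero entry; since the space of linear syzygies is one-dimensional (read off from $0\to S(-4)\oplus S(-5)\to S(-3)^3\to I_X\to 0$), the column $(\ell_1,\ell_2,\ell_3)$ is a constant multiple of it and hence dependent.

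From this lemma (a)$\Leftrightarrow$(c) is immediate: two such cubics $c\,m_1,c\,m_2\in I_X$, together with a third generator $F$, produce a complete intersection $V(c,F)\subseteq X$ of type $(2,3)$ (the forms $c,F$ must be coprime, else a common linear factor would force $X$ positive-dimensional), and conversely a $(2,3)$ complete intersection $V(c,D)\subseteq X$ lets one eliminate $D$ among the three cubics to exhibit two multiples of $c$. For (a)$\Leftrightarrow$(b), the implication $\neg$(c)$\Rightarrow\neg$(b) is trivial since $V(c,D)$ is a length $6$ subscheme on the conic $c$; for $\neg$(b)$\Rightarrow\neg$(a), a length $6$ subscheme $W\subseteq X$ on a conic $c$ has $[I_W]_3\supseteq[I_X]_3+c\cdot S_1$, and when $W$ imposes independent conditions on cubics ($\dim[I_W]_3=4$) a dimension count gives $\dim\bigl([I_X]_3\cap c\,S_1\bigr)\ge 2$, i.e.\ two cubics of $I_X$ with common factor $c$. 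The genuine obstacle here will be the degenerate situations, where $c$ is reducible or $W$ fails to impose independent conditions, in which one must instead argue scheme-theoretically that $W$ is cut out on $c$ by a cubic.

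For (d) I would pass to Macaulay inverse systems. Dualizing $\times f$ on $S/I_X$ shows that $\{G\in E \mid \deg G=5,\ J\circ G=0\}$ is exactly one-dimensional; fix a generator $G_0$. Every artinian Gorenstein quotient of socle degree $5$ containing $J$ equals $S/\ann(\langle G_0\rangle)$, so (d) holds if and only if $\ann(\langle G_0\rangle)$ attains the maximal (compressed) function $(1,3,6,6,3,1)$. To prove $\neg$(a)$\Rightarrow\neg$(d) I would invoke the lemma: writing $q=q_3$ and $m_1,m_2=\ell_1,\ell_2$, one gets $q\circ G_0=\gamma\, w^3$, where $w^3$ is dual to the point $P=V(\ell_1,\ell_2)$ and $P\in X$ (the third minor vanishes there). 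If $\gamma=0$ then $q\in\ann(\langle G_0\rangle)$ and $H(2)\le 5$; if $\gamma\ne 0$, the identity $(q\,p)\circ G_0=\gamma\cdot(\text{const})\cdot p(P)$ for $p\in S_3$ shows that evaluation at $P$ factors through $p\mapsto p\circ G_0$, whence $[\ann(\langle G_0\rangle)]_3\subseteq I_P$; but $f\circ G_0=0$ would then force $f(P)=0$, contradicting that $f$ is a non-zerodivisor. Either way (d) fails. The reverse implication (a)$\Rightarrow$(d) — equivalently, that a quadric annihilating $G_0$ forces two cubics of $I_X$ to share a conic factor — I would obtain either by reversing this correspondence, or more structurally by realizing $F_1,F_2,F_3,f$ and the sought $g$ as the sub-Pfaffians of a $5\times 5$ skew-symmetric matrix via the Buchsbaum–Eisenbud theorem and checking, using (a), that its Pfaffian ideal has codimension $3$. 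I expect this nondegeneracy/codimension verification to be the main obstacle of the whole proposition.
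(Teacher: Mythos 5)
Your treatment of (a)$\Leftrightarrow$(b)$\Leftrightarrow$(c) is essentially sound and close in spirit to the paper's, which also pivots on the observation that dependence of $\ell_1,\ell_2,\ell_3$ is equivalent to two of the three cubic generators sharing a quadratic factor (the paper argues $\neg$(b)$\Rightarrow\neg$(a) by noting that $I_Z/I_X$ has Hilbert function $(0,0,1,1,\dots)$, producing $q\ell'$, $q\ell''$ among the minimal generators). The degenerate case you flag is in fact harmless and worth closing: a length $6$ subscheme $W\subseteq X$ fails to impose independent conditions on cubics only if it contains a length $5$ subscheme of a line $\ell=0$, and then every cubic of $[I_X]_3\subseteq[I_W]_3$ would be divisible by $\ell$, contradicting that the three generators of $I_X$ cut out a zero-dimensional scheme. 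Your argument for $\neg$(a)$\Rightarrow\neg$(d) via the dual socle generator $G_0$ is correct and is a genuinely different (and rather clean) route from the paper's, which instead reads everything off the minimal free resolution of $S/J$: since $f$ is a non-zerodivisor, $S/J$ has Cohen--Macaulay type $2$ with socle generators in degrees $4$ and $5$, and the whole equivalence with (d) is decided by whether the degree $-4$ generator of $(S/J)^\vee$ is multiplied by every linear form into the submodule generated by the degree $-5$ generator.

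The genuine gap is the direction (a)$\Rightarrow$(d), which you do not prove but only name two candidate strategies for, and which you yourself identify as ``the main obstacle.'' This is the substantive half of the proposition: one must show that when $\ell_1,\ell_2,\ell_3$ are independent, no form of degree $\le 2$ annihilates $G_0$ (equivalently, that adjoining the degree $4$ socle generator $g$ to $J$ yields the full Hilbert function $(1,3,6,6,3,1)$). Reversing your $\neg$(a)$\Rightarrow\neg$(d) computation is not formal: if a quadric $q'$ annihilates $G_0$, you get $q'S_1\subseteq[\ann(\langle G_0\rangle)]_3$, but $[\ann(\langle G_0\rangle)]_3$ may strictly contain $[J]_3=[I_X]_3+\langle f\rangle$, and the dimension count $\dim\bigl(q'S_1\cap[J]_3\bigr)\ge 2$ can fail (e.g.\ when $H_{S/\ann(\langle G_0\rangle)}(2)\le 4$ the count gives only $\ge 1$), and even when it succeeds the two elements of $q'S_1\cap[J]_3$ may involve $f$ rather than lying in $[I_X]_3$. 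The Buchsbaum--Eisenbud alternative likewise requires a nondegeneracy verification you have not supplied. The paper closes exactly this step by the duality argument on the resolution of $S/J$ sketched above; as it stands, your proposal establishes (a)$\Leftrightarrow$(b)$\Leftrightarrow$(c) and (d)$\Rightarrow$(a), but not (a)$\Rightarrow$(d).
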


\begin{proof}
Clearly (b) implies (c).  A subscheme, $Z$, of length $6$ on a conic is a complete intersection unless the forms in $I_Z$ of degree $3$ in the ideal all have a common factor.  But $I_Z$ contains $I_X$, and by assumption the forms of degree $3$ in $I_X$ do not have a common factor, since they generate $I_X$.  Hence (c) implies (b).

 The linear
  forms span either a $2$-dimensional 
  or a $3$-dimensional space. If they span a $2$-dimensional space, we
  can assume that $\ell_3=0$ and we get a subscheme of length $6$ on the
  conic defined by $q_3$. If $Z \subseteq X$ is a subscheme of length $6$
  on a conic, we get that $I_Z/I_X$ has Hilbert function
  $(0,0,1,1,1,\dots)$, so we have a quadratic generator $q$ of $I_Z$ and
  two linear forms $\ell'$ and $\ell''$ such that $q\ell'$ and $q\ell''$
  are in $I_X$. Since these elements have degree three, they are among the
  minimal generators of $I_X$ and they have a common factor. This forces
  the three linear forms of the Hilbert-Burch matrix to be linearly
  dependent. Thus (a), (b) and (c) are equivalent.
  
  Now look at the minimal free resolution of $S/J$. Since $f$ is a non-zerodivisor,
  we get the resolution of $S/J$ from the resolution of $S/I_X$ and the
  Betti diagram will be
  $$
  \begin{matrix}
    &0&1&2&3\\
    \text{total:}&1&4&5&2\\
    \text{0:}&1&\text{.}&\text{.}&\text{.}\\
    \text{1:}&\text{.}&\text{.}&\text{.}&\text{.}\\
    \text{2:}&\text{.}&4&1&\text{.}\\
    \text{3:}&\text{.}&\text{.}&1&\text{.}\\
    \text{4:}&\text{.}&\text{.}&3&1\\
    \text{5:}&\text{.}&\text{.}&\text{.}&1\\
  \end{matrix}
  $$
  where the Hilbert-Burch matrix $M$ will occur as a submatrix of the last
  homomorphism.  From the last column, we see that we have one socle element in
  degree four. When dualizing, the dual module
  $(S/J)^\vee=\Hom_k(S/J,k)$, has two minimal generators, one in
  degree $-4$ and one in degree $-5$. 

  If the linear forms of $M$
  are linearly independent, the generator in degree $-5$ generates
  everything in degree $-3$ and the generator in degree $-4$ is
  multiplied into the module generated by the other element by the
  maximal ideal. 
  This means that when we choose $g$ to be the
  generator of the socle in degree $4$, we get a Gorenstein quotient $S/(J+(g))$
  with $h$-vector $(1,3,6,6,3,1)$. 
  
  If the linear forms of $M$ only span a $2$-dimensional space, this
  means that the dual generator corresponding to the socle element of
  degree $4$ is not multiplied into the module generated by the other
  generator by all linear forms. Hence, the dual  generator of degree
  $-5$ corresponding to the socle element of degree $5$ does not 
  generate all forms of degree $-3$ in the dual and we cannot get a
  Gorenstein quotient of $S/J$ with $h$-vector $(1,3,6,6,3,1)$.
\end{proof}

We want to show that any graded artinian Gorenstein algebra with
$h$-vector $(1,3,6,6,3,1)$ has the WLP.  First we note that, if the
WLP fails, then the rank of the corresponding multiplication homomorphism is off by exactly one.

\begin{lemma} \label{rank}
Let $A$ be a graded artinian  algebra with $h$-vector $(1,3,6,6,
\dots)$.  Let $L$ be a general linear form.  Then $\times L : [A]_i
\rightarrow [A]_{i+1}$ is injective for $i=0,1$, and when $i=2$ the
rank is at least $5$.

\begin{proof}
First, it is clear that the only question arises when $i=2$, since for $i=0,1$, the ring $A$ coincides with the polynomial ring.  Let $A = S/I$ and let $J = \frac{I+(L)}{(L)}$, which we will view by abuse of notation as an ideal in $k[x,y]$. We have the exact sequence
\[
[A]_2 \stackrel{\times L}{\longrightarrow} [A]_3 \rightarrow [k[x,y]/J]_3 \rightarrow 0.
\]
Since $6 = \binom{4}{3} + \binom{2}{2} + \binom{1}{1}$, it follows from a theorem of Green (cf. \cite[Theorem 1]{green}) that $\dim [k[x,y]/J]_3 \leq 1$, i.e. that $\times L$ has rank $\geq 5$ as claimed.
\end{proof}

\end{lemma}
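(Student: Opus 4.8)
The plan is to treat the cases $i=0,1$ separately as essentially formal, and then to reduce the single substantive case $i=2$ to one application of Green's hyperplane restriction theorem. No structural information about $A$ beyond its $h$-vector should be needed.

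For $i=0$ and $i=1$ the map is trivially injective. Since $h_0=1$, $h_1=3$, $h_2=6$ coincide with $\dim_k S_0$, $\dim_k S_1$, $\dim_k S_2$ for $S=k[x,y,z]$, the ideal $I$ contains no nonzero form of degree $\le 2$; hence $[A]_i=S_i$ for $i\le 2$, and $\times L\colon S_i\to S_{i+1}$ is just multiplication by $L$ in the domain $S$, which is injective for any nonzero $L$. Thus only $i=2$ requires an argument.

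For $i=2$ I would pass to the quotient by a \emph{general} linear form $L$. Writing $A=S/I$, $\bar S=S/(L)\cong k[x,y]$, and $J=(I+(L))/(L)\subseteq\bar S$, one has $A/LA\cong\bar S/J$, and multiplication by $L$ fits into
\[
[A]_2\xrightarrow{\ \times L\ }[A]_3\longrightarrow[\bar S/J]_3\longrightarrow 0 ,
\]
since $(A/LA)_3$ is precisely the cokernel of $\times L$ in this degree. Therefore the rank of $\times L$ equals $\dim_k[A]_3-\dim_k[\bar S/J]_3=6-\dim_k[\bar S/J]_3$, and it suffices to show $\dim_k[\bar S/J]_3\le 1$.

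To bound this I would invoke Green's theorem \cite{green}, which for a general $L$ gives $\dim_k[A/LA]_d\le(\dim_k[A]_d)_{\langle d\rangle}$, where $(\,\cdot\,)_{\langle d\rangle}$ is Green's operation on the $d$-th Macaulay representation. Taking $d=3$ and $\dim_k[A]_3=h_3=6$, with Macaulay representation $6=\binom{4}{3}+\binom{2}{2}+\binom{1}{1}$, Green's operation subtracts $1$ from the top entry of each binomial coefficient, yielding $\binom{3}{3}+\binom{1}{2}+\binom{0}{1}=1$. Hence $\dim_k[\bar S/J]_3\le 1$ and the rank is at least $6-1=5$. The only point that must be handled carefully is the correct invocation of Green's theorem, namely the requirement that $L$ be general so the hyperplane section is generic, and the identification of the $d=3$ Macaulay representation of $6$ displayed above; once these are in place the numerical conclusion is immediate.
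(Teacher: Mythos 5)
Your proposal is correct and follows essentially the same route as the paper: dispose of $i=0,1$ by noting $[A]_i=S_i$ in those degrees, then for $i=2$ restrict to a general hyperplane and apply Green's theorem to the Macaulay representation $6=\binom{4}{3}+\binom{2}{2}+\binom{1}{1}$ to bound $\dim_k[\bar S/J]_3\le 1$. The only difference is that you spell out the justification for the low-degree cases and the computation of Green's operation slightly more explicitly than the paper does.
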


We now consider a $4$-dimensional vector subspace, $W = \langle f_1,f_2,f_3, f_4\rangle$, of cubics in $S$.  Assume that $f_1,f_2,f_3,f_4$ have no common zeros in $\mathbb P^2$, so they define an artinian ideal.  Associated to these polynomials is a morphism $\Phi : \mathbb P^2 \rightarrow \mathbb P^3$.   $\Phi$ is either generically one-to-one or generically three-to-one, and correspondingly $\Phi(\mathbb P^2)$ either has degree 9 or 3. We will be interested in pencils defined by $2$-dimensional subspaces of $W$.  Notice that it is equivalent to choose a general $2$-dimensional subspace and consider its base locus, $Y$, which will be a complete intersection, or to choose a general line, $\lambda$, in $\mathbb P^3$, intersect it with $\Phi(\mathbb P^2)$, and look at the pre-image of the intersection points.  We will use both points of view.

\begin{remark} \label{exist?}
One of the consequences of our work will be that if $R/( f_1, f_2, f_3,f_4)$ fails the WLP then $\Phi$ cannot be generically one-to-one, except possibly in characteristic 3 (see Proposition \ref{answer ref 2}).  However, in order to finally reach this conclusion, we have to begin by seeing some intermediate consequences of such a property.  

On the other hand, the well-known example $W=\langle x^3, y^3, z^3, xyz \rangle$ shows that $\Phi$ {\em can} be generically three-to-one.  
\end{remark}

\begin{proposition} \label{hesse}
Let $f_1, f_2, f_3, f_4$ be independent forms of degree $3$ defining
an artinian ideal, $J$.  Assume that $S/J$ fails the WLP from degree
$2$ to degree $3$. Let $\mathcal L$ be the pencil of cubics in
$\mathbb P^2$ defined by a general $2$-dimensional subspace of
$W=\langle f_1,f_2,f_3,f_4\rangle$ and let $Y$ be the scheme-theoretic
base locus of $\mathcal L$.

\begin{itemize}

\item[(a)] If $\Phi$ is generically one-to-one, then $Y$ is a reduced Hesse configuration in $\mathbb P^2$. 

\item[(b)] If $\Phi$ is generically three-to-one, then $Y$ decomposes
as  $\Sigma_1 \cup \Sigma_2 \cup \Sigma_3$, where
$Q_1=\Phi(\Sigma_1)$,
$Q_2=\Phi(\Sigma_2)$ and
$Q_3=\Phi(\Sigma_3)$ are distinct points. Furthermore, any line in
$\mathbb P^2$ joining points from two of these parts of $Y$ will also
contain a point from the third part. 
For each $i$, $\Sigma_i$ is a scheme of degree $3$ that imposes only one condition on the elements of $W$.

\end{itemize}
\end{proposition}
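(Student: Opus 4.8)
The plan is to translate the failure of the WLP into a geometric constraint on $\Phi$ and then read off both statements from the intersection of $\lambda$ with the image of a line under $\Phi$. Since $S/J$ has $h$-vector beginning $(1,3,6,6,\dots)$, Lemma~\ref{rank} shows that failure of the WLP from degree $2$ to degree $3$ forces $\times L\colon [S/J]_2\to[S/J]_3$ to have rank exactly $5$ for general $L$, hence a one-dimensional kernel; so there is a quadric $q$ with $Lq\in[J]_3=W$. Writing $Lq=\Phi^*H$ for a hyperplane $H$ and $\ell=\{L=0\}$, the factorization $\{Lq=0\}=\ell\cup\{q=0\}$ gives $\Phi(\ell)\subseteq H$. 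The property that $\Phi$ sends a line into a hyperplane, i.e. that the restriction $W\to H^0(\mathcal O_\ell(3))$ is not injective, is closed on the dual $\mathbb P^2$ of lines (vanishing of a determinant of a map of rank-$4$ bundles), so holding generically it holds for \emph{every} line $\ell$. Since $W$ is base-point free, $\Phi|_\ell\colon\mathbb P^1\to H_\ell\cong\mathbb P^2$ is then given by three independent cubics with no common zero, whose image is not a line and has degree dividing $3$; hence $\Phi(\ell)$ is a plane cubic and $\Phi|_\ell$ is birational onto it. This reformulation is the heart of the matter.

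A general $2$-dimensional subspace of $W$ is $\Phi^*$ of a general pencil of hyperplanes with base line $\lambda$, so $Y=\Phi^{-1}(\lambda)$, and $\deg\Phi\cdot\deg\Phi(\mathbb P^2)=9$. In case (a) the general $\lambda$ meets the degree-$9$ image in $9$ reduced points with single reduced preimages, so $Y$ is $9$ distinct points. Given distinct $P_1,P_2\in Y$, set $\ell=\overline{P_1P_2}$: as $\Phi(P_1),\Phi(P_2)$ are distinct points lying both on $\lambda$ and in $\Phi(\ell)\subseteq H_\ell$, we get $\lambda\subset H_\ell$, and inside the plane $H_\ell$ the cubic $\Phi(\ell)$ meets the line $\lambda$ in length $3$, yielding a third point $R=\Phi(P_3)$ with $P_3\in Y\cap\ell$ and $P_3\ne P_1,P_2$. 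As $Y\cap\ell$ is cut out on $\ell$ by the single cubic $(\Phi|_\ell)^*h$ (where $\lambda=\{h=0\}$ in $H_\ell$) it has length exactly $3$, so $Y$ is a reduced Hesse configuration.

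In case (b), $\lambda$ meets the cubic surface $\Phi(\mathbb P^2)$ in three distinct points $Q_1,Q_2,Q_3$ and $\Sigma_i:=\Phi^{-1}(Q_i)$ has degree $3$ with $Y=\Sigma_1\cup\Sigma_2\cup\Sigma_3$. That $\Sigma_i$ imposes a single condition on $W$ follows from $\Phi^*\colon(\mathbb P^3)^\vee\xrightarrow{\sim}W$: a cubic $\Phi^*H$ vanishes on $\Phi^{-1}(Q_i)$ exactly when $H(Q_i)=0$, one linear condition, so the cubics of $W$ through $\Sigma_i$ form a $3$-dimensional space. For the collinearity, take $P_1\in\Sigma_1$, $P_2\in\Sigma_2$ and $\ell=\overline{P_1P_2}$; as above $\lambda\subset H_\ell$ and $Y\cap\ell=(\Phi|_\ell)^{-1}(\lambda)$ is cut out by a cubic on $\ell$, hence has length $3$, say $\{P_1,P_2,P_3\}$. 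Its image lies in $\lambda\cap\Phi(\mathbb P^2)=\{Q_1,Q_2,Q_3\}$, and since $\Phi|_\ell$ is birational onto the cubic $\Phi(\ell)$ the third point cannot return to $Q_1$ or $Q_2$; thus $\Phi(P_3)=Q_3$ and $P_3\in\Sigma_3\cap\ell$.

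The delicate part is the passage from ``general line'' to ``every line'' together with the scheme-theoretic bookkeeping: one must justify the closedness/spreading-out in the first paragraph and ensure that for the special lines $\overline{P_1P_2}$ the intersection $\lambda\cap\Phi(\ell)$ stays reduced and avoids the branch locus of $\Phi|_\ell$, so that the third point is genuinely new and maps to $Q_3$ rather than doubling $Q_1$ or $Q_2$. This is also where positive characteristic can interfere, since the fibers of the degree-$3$ map $\Phi$ in case (b) may fail to be reduced when $\mathrm{char}\,k=3$.
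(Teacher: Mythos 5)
Your overall strategy is the same as the paper's: translate the failure of WLP into the statement that $\Phi$ maps every line of $\mathbb P^2$ into a hyperplane (via $\ell q\in W$), and then read off the collinearity from the intersection of the general line $\lambda$ with $\Phi(\mathbb P^2)$. Your replacement of the paper's liaison step is legitimate and arguably cleaner: once $\lambda\subset H_\ell$, the form $\ell q=\Phi^*H_\ell$ lies in $\mathcal L\subseteq I_Y$, so $I_Y$ restricts on $\ell$ to a single nonzero cubic and $Y\cap\ell$ has length exactly $3$. The paper reaches the same conclusion by linking: if $Y\cap\ell$ had length $2$, the residual length-$7$ scheme would have $h$-vector $(1,2,3,1)$ and lie on no conic, while $q\in I_Y:\ell$ provides such a conic. (The paper also derives $\ell q\in I_Y$ slightly differently, from the fact that two points of $Y$ with distinct images impose two conditions on $W$ and hence determine $\mathcal L$.) Your one-line argument that $\Sigma_i$ imposes a single condition, via $\Phi^*$ identifying $W$ with the hyperplanes of $\mathbb P^3$, is also fine and matches the paper in substance.

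The genuine gap is the statement you assert and then flag as ``delicate'' without proving: that a general line $\lambda$ meets $\Phi(\mathbb P^2)$ transversally in $\deg\Phi(\mathbb P^2)$ distinct reduced points. The proposition is stated, and later applied, in arbitrary characteristic, where Bertini/generic smoothness is unavailable, and this is exactly where the paper does its real work: it first shows the singular locus of $\Phi(\mathbb P^2)$ is a proper subvariety (if all partials of the irreducible equation $h$ vanished identically on the image they would vanish identically, forcing $h$ to be a polynomial in $p$th powers and hence a $p$th power, contradicting irreducibility), and then counts dimensions to see that the lines through singular points and the lines lying in tangent planes at smooth points each form a family of dimension $3<4$ in the Grassmannian, so the general $\lambda$ is transversal and Bezout yields distinct points. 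Without this you get neither the reducedness of $Y$ in (a) nor the three distinct $Q_i$ in (b). Moreover, your justification in case (b) that the third point of $Y\cap\ell$ ``cannot return to $Q_1$ or $Q_2$'' is wrong as stated: birationality of $\Phi|_\ell$ onto the plane cubic $\Phi(\ell)$ does not prevent $\lambda$ from being tangent to $\Phi(\ell)$ at $Q_1$ or $Q_2$, in which case the length-$3$ scheme $Y\cap\ell$ is supported entirely in $\Sigma_1\cup\Sigma_2$. What rules this out is again that a general $\lambda$ is not tangent to $\Phi(\mathbb P^2)$ (a tangent line to the curve $\Phi(\ell)\subset\Phi(\mathbb P^2)$ is tangent to the surface). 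So the single transversality statement you defer is the load-bearing ingredient for both parts and must be supplied.
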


\begin{proof}

Note that the condition that $\mathcal L$ is {\em general} implies
that its base locus, $Y$, is zero-dimensional, and that $Y$ being zero
dimensional  is equivalent to the condition that $Y$ is a complete
intersection (scheme-theoretically).  

Since $\mathcal L$ corresponds
to a general line in $\mathbb P^3$, the image of $Y$ under $\Phi$
consists of $\deg(\Phi(\mathbb P^2))$ distinct points. 
  In characteristic $0$ this is a consequence of Bertini's theorem,
  but in characteristic $p$ we need a separate argument. Let $h$ be
  the defining polynomial of the image $\Phi(\mathbb P^2)$ in ${\mathbb P}^3$. Since $\Phi(\mathbb P^2)$ is the image of an irreducible variety, we have that $h$ is an irreducible polynomial. 

  The tangent space to $\Phi(\mathbb P^2)$ at a point is $2$-dimensional, unless $\partial   h/\partial x_0 = \partial h/\partial x_1= \partial h/\partial x_2   = \partial h/\partial x_3 = 0$. The singular locus is given by the   points where all partial derivatives vanish. This is a subvariety   of codimension at least $1$ unless all partials vanish completely   on $\Phi(\mathbb P^2)$, but that means that they are identically   zero, since $f$ does not have any nontrivial factors. Thus we get   that all variables occur only as $p$th powers, i.e., $h$ is a   polynomial in $x_0^p,x_1^p,x_2^p$. Since the characteristic of   $k$ is $p$, we hence get that $h$ is a $p$th power of another form, contradicting the irreducibility of $h$. 

  The tangent lines are either lines through a singular point of $\Phi(\mathbb P^2)$, which gives us a variety of dimension at most $1+2=3$ in the $4$-dimensional grassmannian of lines in ${\mathbb P}^3$, or lines in the $2$-dimensional tangent spaces at the nonsingular points, which gives a family of dimension $2+1=3$. Thus, the general line is not tangent to $\Phi(\mathbb P^2)$, and hence it meets $\Phi(\mathbb P^2)$ in $\deg(\Phi(\mathbb P^2))$ distinct points by Bezout's theorem.

Let $\mathcal L, W$ and $Y$ be as in the statement of (a).  
Since $\mathcal L$ corresponds to the intersection of $\Phi(\mathbb P^2)$
with a general line, $\lambda$, we can assume that $\Phi(Y)$ consists
of nine distinct points and hence $Y$ is reduced.
 
Let $Z = \{P_1,P_2\}$ be a set of two points of $Y$, so the $h$-vector of $S/I_Z$ is $(1,1)$.  There are two possibilities, a priori: either (i) $Z$ imposes one condition on $W$ or (ii) $Z$ imposes two conditions. But in fact since one can find a hyperplane in $\mathbb P^3$ containing $\Phi(P_1)$ but not $\Phi(P_2)$, $Z$ imposes two conditions.  Thus $Z$ uniquely determines $\mathcal L$.  

Let $\ell$ be the linear form in the ideal of $Z$.  Consider the ideal
$(\ell, I_Y)$ which needs not be saturated.  This defines a zero-dimensional scheme, which we want to show has length exactly 3.  First note that the length must be at most 3, since it is a zero-dimensional scheme cut out by cubics. Suppose that $(\ell, I_Y)$ defines a scheme of length 2, namely $Z$.  Then by liaison, $I_Y : (\ell, I_Y) = I_Y : (\ell) := I_{Z'}$ defines a scheme, $Z'$, of length $7$ not lying on a conic, with $h$-vector $(1,2,3,1)$.  On the other hand, since $S/J$ fails the WLP, there is a form $q$ of degree $2$ such that $\ell \cdot q \in (f_1,f_2,f_3,f_4)$.  Since $Z$ determines $\mathcal L$ and $\ell \cdot q \in I_Z$, we have $\ell \cdot q \in I_Y$ and  $q \in I_{Z'}$.  Contradiction.  Thus $(\ell, I_Y)$ defines a scheme of length $3$ and $Z'$ lies on a conic, since its $h$-vector is $(1,2, 2,1)$ (again by liaison).  Since $Z$ was an arbitrary choice of two points of $Y$, we are done (a).

Now consider case (b).  As before, for a general line $\lambda$ we can
assume that $\lambda$ meets $\Phi(\mathbb P^2)$ in three distinct
points, $Q_1,Q_2,Q_3$. Hence the pre-image of these three points is a
disjoint union $Y = \Sigma_1 \cup \Sigma_2 \cup \Sigma_3$, where
$\Sigma_i = \Phi^{-1} (Q_i)$ is a scheme of length $3$ and $Y$ is still
a complete intersection.  Let $P_i$ and $P_j$ be reduced points in
$\Sigma_i$ and $\Sigma_j$ respectively ($i \neq j$).  The same
argument as above shows that the line joining $P_i$ and $P_j$ meets
$Y$ in a third point. We have to argue that it cannot be a point of
$\Sigma_i$ or $\Sigma_j$. Suppose it was. Then the plane cubic curve,
which is the image of the line joining $P_i$ and $P_j$,
would meet the line $\lambda$ at only two distinct points. This
means that $\lambda$ is tangent to the curve, but hence also tangent
to $\Phi(\mathbb P^2)$. However, as we saw above, a general line will
not be tangent to $\Phi(\mathbb P^2)$. Thus we conclude that the line
contains points from each of the three parts.   

The  pre-image of $Q_i$ can be found by choosing three general hyperplanes through $Q_i$ and considering the intersection in $\mathbb P^2$ of the three elements of $W$ corresponding to these hyperplanes.  Let $P \in \Sigma_i$ be a reduced point. The fact that $\Sigma_i$ is the pre-image of $Q_i$ means that any element of $W$ vanishing at $P$ (which is a codimension $1$ condition) in fact vanishes on all of $\Sigma_i$.  Thus $\Sigma_i$ imposes only one condition.
\end{proof}

In the proof of the main theorem, we will use the fact that complete
intersections of type $(2,3,3)$ have the WLP. This is known in
characteristic zero, but here we provide a complete characterization
in any characteristic. 

\begin{lemma}\label{233}
If $k$ is an infinite field, any complete intersection with Hilbert function $(1,3,5,5,3,1)$ has the WLP, except  when the characteristic of the field $k$ is $3$ and the ideal is $I=(x^2,y^3,z^3)$, after a change of variables. 
\end{lemma}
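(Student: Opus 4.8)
The plan is to reduce the Weak Lefschetz Property for $A=S/(F,G,H)$, where $\deg F=2$ and $\deg G=\deg H=3$, to the single multiplication $\times L\colon[A]_2\to[A]_3$ in the middle degree. For $L\neq 0$ the map $[A]_0\to[A]_1$ is injective, and since $I_2=\langle F\rangle$ is one–dimensional a general $L$ does not divide $F$, so $[A]_1\to[A]_2$ is injective as well; by the complete–intersection (Gorenstein) duality with socle degree $5$, these injectivities are equivalent to the surjectivity of $[A]_3\to[A]_4$ and $[A]_4\to[A]_5$. As each of these is an open condition on $L$, the WLP holds if and only if the self–dual middle map is bijective for general $L$. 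Passing to $\bar A=A/LA$ on the line $\lambda=\{L=0\}\cong\mathbb P^1$, bijectivity is equivalent to $[\bar A]_3=0$, i.e.\ to the linear independence of $\{u\bar F,v\bar F,\bar G,\bar H\}$ in the space of binary cubics. Unwinding this, the WLP fails (for general $L$) exactly when, for a general line $\lambda$, some member of the pencil $\langle G,H\rangle$ contains the length–two subscheme $C\cap\lambda$ that the conic $C=V(F)$ cuts on $\lambda$.

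The first thing I would record is the consequence of the complete–intersection hypothesis that the base locus $B=V(G)\cap V(H)$ (nine points) is disjoint from $C$, since a common point of $C,V(G),V(H)$ would violate artinianness. I then split into three cases according to $C$. If $C$ is smooth, then $C\cong\mathbb P^1$ and the chords $C\cap\lambda$ sweep out the complete linear system $|\mathcal O_C(1)|$, i.e.\ all pairs of points of $C$; the pencil restricts to a finite degree–six morphism $\phi|_C\colon C\to\mathbb P^1$, non‑constant because otherwise $F$ would divide some $G-\mu H$ and the ideal would drop to codimension two. A general pair of points of $C$ therefore has distinct images, so no pencil member contains it and the WLP holds. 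The case where $C$ is a union of two distinct lines is analogous: $\phi$ restricts to a non‑constant degree–three map on each line (again by artinianness), and a general chord meets the two lines in two independent general points with distinct images. Both arguments are characteristic–free.

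The substance of the lemma is the remaining case, where $C=2\ell$ is a double line; after a coordinate change $\ell=\{x=0\}$ and $F=x^2$. Now $C\cap\lambda=2A$ is supported at $A=\ell\cap\lambda$, so failure means that the unique pencil member $D_A$ through $A$ vanishes to order at least two along a general line through $A$. If $A$ were a smooth point of $D_A$ this would force $\lambda$ to be the tangent line, a one–dimensional (hence non‑generic) condition; so the WLP fails precisely when $D_A$ is singular at $A$ for general $A\in\ell$, equivalently $x\mid H\nabla G-G\nabla H$ componentwise. Writing $G_0=G|_\ell$, $H_0=H|_\ell$, $G_1=\partial_xG|_\ell$, $H_1=\partial_xH|_\ell$, the $y$– and $z$–components restricted to $\ell$ give $H_0\,\partial_yG_0=G_0\,\partial_yH_0$ and the same with $\partial_z$, i.e.\ $d(G_0/H_0)=0$. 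In characteristic $0$, in characteristic $p>3$, and in characteristic $2$, a degree count forces $G_0,H_0$ to be linearly dependent or to share a linear factor, either of which yields a common zero of $F,G,H$ on $\ell$, contradicting the complete–intersection hypothesis. Only in characteristic $3$ can $d(G_0/H_0)=0$ hold with $G_0,H_0$ independent, and there an inseparability (Wronskian) computation gives $G_0=m^3$, $H_0=n^3$ for independent linear forms $m,n$. The third component $x\mid H_0G_1-G_0H_1$ then forces $G_1=H_1=0$ by degree, so $G\equiv m^3$ and $H\equiv n^3$ modulo $(x^2)$, whence $(F,G,H)=(x^2,m^3,n^3)$; sending $m,n$ to $y,z$ identifies the ideal with $(x^2,y^3,z^3)$. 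Conversely, in characteristic $3$ every member $ay^3+bz^3$ of the pencil is a perfect cube (a triple line), hence vanishes to order three along any transversal, and the WLP indeed fails — as one also checks directly from the determinant of $\times L$ in the middle degree being equal to $3$.

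I expect the main obstacle to be exactly this last, positive–characteristic step: controlling the vanishing on $\ell$ of the Wronskian–type expression $H\nabla G-G\nabla H$. The delicate feature is that in characteristic $3$ the Euler relation degenerates ($3D=0$) and the cube map on linear forms becomes additive, so the tangency condition that is non‑generic in characteristic $0$ becomes automatic; pinning down exactly which ideals realize this, and ruling out characteristics $2$ and $p>3$ via artinianness, is the technical heart of the proof.
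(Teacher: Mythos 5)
Your proposal is correct, and it takes a genuinely different route from the paper's proof. The paper first observes that to kill a relation $\ell q_1=\ell_1 q+\alpha f+\beta g$ it suffices to find a linear form $\ell$ that is a zerodivisor on the degree-$6$ scheme $X=V(q,f)$ (forcing $\beta=0$) yet multiplies injectively on $S/(q,f)$ from degree $2$ to $3$; it then produces such an $\ell$ through a simple point of a general member $(q,\alpha f+\beta g)$, using linkage (the residual scheme has $h$-vector $(1,2,2)$, so $q$ is its only quadric), and disposes of the two remaining cases --- $q$ a square, and characteristic $3$ with $f,g$ cubes --- by explicit $5\times 6$ multiplication matrices in adapted monomial bases. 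You instead restrict to a general line $\lambda$: after the (correct) reduction to bijectivity of the self-dual middle map, failure of WLP becomes the statement that the length-two divisor $V(F)\cap\lambda$ lies on a member of the pencil $\langle G,H\rangle$ for general $\lambda$, and you analyze this according to the rank of the unique quadric $F$ in the ideal (an invariant of $I$, so the trichotomy is intrinsic; note also that the relevant condition only depends on $G,H$ modulo $S_1F$, so the choice of cubic generators is harmless). The smooth and two-line cases follow from a clean fiber-product dimension count, and the double-line case reduces to the vanishing of the Wronskian of the binary cubics $G|_\ell,H|_\ell$, which by the $p$-th power structure of $\ker d$ in $k(t)$ forces either a common zero on $\ell$ (contradicting artinianness) or, exactly in characteristic $3$, that both restrictions are cubes of linear forms; the $\partial_x$-component then pins down $(x^2,y^3,z^3)$. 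What your approach buys is a uniform, computation-free treatment and a conceptual explanation of why characteristic $3$ is exceptional (pencil members degenerate to triple lines under Frobenius); what the paper's approach buys is elementary self-containedness, needing only linkage of zero-dimensional schemes and determinants of small matrices. I checked the delicate points of your argument --- the equivalence of WLP failure with the geometric condition on $\lambda$, the non-constancy of the restricted pencil maps via artinianness, the dichotomy ``tangent line versus singular point'' in the double-line case, and the degree count $3-\deg\gcd\equiv 0\pmod p$ --- and they all hold; your parenthetical remark about the middle determinant equalling $3$ for $(x^2,y^3,z^3)$ is in fact literally true for $L=x+y+z$.
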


\begin{proof}
  Let $I=(q,f,g)\subseteq S=k[x,y,z]$ be a complete intersection in
  $S$, where $\deg q=2$ and $\deg f=\deg g=3$. Now $g$ is a non-zerodivisor on $X$ defined by $(q,f)$. If the multiplication by a linear
  form $\ell$ is not injective on $A=S/I$ from degree $2$ to degree
  $3$ we have that $\ell q_1 = \ell_1 q + \alpha f + \beta g$, for
  some quadratic form $q_1$. This means that $\beta=0$ if $\ell$ is a zerodivisor on $X$. Thus, in order to show that we have the WLP, it suffices to find a linear form $\ell$ which is a zerodivisor on $X$ but still have injective multiplication from degree $2$ to degree $3$ on the coordinate ring $B=S/(q,f)$ of $X$. 

We will first show that for a general form $h=\alpha f + \beta g$, the subscheme defined by $(q,h)$ has at least one simple point, unless $q$ is a square or the characteristic of $k$ is $3$. First assume that $q=\ell_1\ell_2$ and we get $(q,h) = (\ell_1,h)\cap(\ell_2,h)$. On each of the lines, we get a simple point, unless $h$ is a cube. However, in the polynomial ring $k[x,y]$, a general linear combination of two cubes $\alpha x^3+\beta y^3$ is a cube if and only if the characteristic of $k$ is $3$. If we specialize from a nonsingular conic to a union of two lines, we get at least the same multiplicities in the limit as in the general elements. Thus, unless we are in characteristic $3$, we have a simple point of the general complete intersection $(q,h)$. 

Let $\ell$ be a linear form which meets $X=V(q,f)$ in a simple point only. Then the residual 
subscheme $Y$ given by $(q,f)\colon \ell$ has $h$-vector $(1,2,2)$ by linkage. Thus $q$ is the only quadric vanishing on $Y$, which shows that multiplication by $\ell$ is injective from degree $2$ to degree $3$ on the coordinate ring of $X$ and since it is a zerodivisor on $X$ it is also injective on $A=S/(f,g,h)$. 

Now, suppose that $q$ is a square. We can assume that $q = x^2$ after a change of variables and that $(x,y)$ defines a point where $(x,f)$ is not a double point. We can write $f=a_0xy^2+a_1xyz+a_2xz^2+a_3y^3+a_4y^2z+yz^2$, since we know that the coefficient of $yz^2$ is non-zero as $(x,y)$ is not a double point on $(x,f)$. We look at the multiplication by the linear form $\ell = \alpha x+ \beta y$, for $\alpha,\beta\in k$. We can choose bases $\{xy,xz,y^2,yz,z^2\}$ and $\{xy^2,xyz,xz^2,y^3,y^2z,z^3\}$ for $B_2$ and $B_3$ since $f$ allows us to solve for $yz^2$. Then the matrix corresponding to multiplication by $\ell$ will be
$$
\begin{bmatrix}
  \beta & 0 & 0 & 0 & 0 &0 \\
  0 & \beta & 0 & 0 & 0 & 0\\
  \alpha  & 0 & 0 & \beta & 0 &0\\
  0 & \alpha & 0 & 0 & \beta &0\\
  -a_0\beta & -a_1\beta & \alpha-a_2\beta & -a_3\beta & -a_4\beta &0\\
\end{bmatrix}
$$
with the first maximal minor equal to $\beta^4(\alpha-a_2\beta)$. Thus
for $\beta\ne0$ and $\alpha\ne a_2\beta$, the multiplication by $\ell$
is injective on $S/(q,f)$ from degree $2$ to degree $3$. Since $\ell$
is a zerodivisor on $X$, we conclude that $\ell$ is also injective on $S/(q,f,g)$.

We now turn to the remaining case, when the characteristic of $k$ is $3$ and $f,g$ are cubes of linear forms, and suppose that $q$ is not a power of a linear form. We can assume that $f=y^3$ and $g=z^3$. Furthermore, we may assume that $(x,y)$ defines a point on $q=0$ other than the double point of $q$. Since $(q,f,g)$ is a complete intersection, the coefficient of $x^2$ in $q$ has to be non-zero, so we may write $q=x^2+a_1xy+a_2xz+a_3y^2+a_4yz$. Now we choose bases $\{xy,xz,y^2,yz,z^2\}$ and $\{xy^2,xyz,xz^2,yz^3,y^2z,z^3\}$ for $A_2$ and $A_3$ and the multiplication homomorphism for the linear form $\ell = \alpha x+ \beta y$ is given by 
$$
\begin{bmatrix}
  \beta -\alpha a_1 & -\alpha a_2 & 0 & -\alpha a_4 & 0 & 0 \\
  0 & \beta - \alpha a_1 & -\alpha a_2 & -\alpha a_3 & -\alpha a_4 & 0 \\
\alpha & 0 & 0 & 0 & 0 &0 \\
0&\alpha & 0 & \beta & 0 &0 \\
0&0&\alpha & 0 & \beta &0 \\
\end{bmatrix}.
$$
The first maximal minor of this matrix equals $\alpha^3(a_4\alpha -a_2\beta)^2$ which is non-zero for general $\alpha,\beta$ unless $a_2=a_4=0$. If $a_2=a_4=0$, we have that $q$ is a product of two linear forms vanishing at $[0:0:1]$, contradicting the assumption that $(x,y)$ is a smooth point of $q$. Hence we conclude that the only possibility for the WLP to fail is when the characteristic of $k$ is $3$ and  $I=(x^2,y^3,z^3)$ up to a change of variables, which indeed fails the WLP in characteristic 3.
\end{proof}

\begin{theorem}\label{MainThm}
Any artinian Gorenstein
  algebra $S/I$ with Hilbert function $(1,3,6,6,3,1)$ has the WLP unless the characteristic of $k$ is $3$ and the ideal $I = (x^2y, x^2z, y^3, z^3, x^4+y^2z^2)$,  after a change of variables. 
\end{theorem}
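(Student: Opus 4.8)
The plan is to assume $A=S/I$ fails the WLP and to push the analysis down to either a contradiction or the single exceptional ideal. By Lemma~\ref{rank} the only map that can fail to have maximal rank is $\times L\colon [A]_2\to[A]_3$, and when it fails its rank is exactly $5$. Since $[I]_2=0$ and $[I]_3=:W$ has dimension $4$, this map is just $\times L\colon S_2\to S_3/W$, so its behaviour depends only on $W$; concretely, the WLP fails if and only if for a general linear form $L$ there is a nonzero quadric $q$ with $Lq\in W$. By the Buchsbaum--Eisenbud Structure Theorem $I$ is generated by the sub-maximal Pfaffians of a skew-symmetric matrix, and the $h$-vector forces exactly four cubic generators (spanning $W$) together with a quartic generator; moreover a suitable submatrix recovers the Hilbert--Burch matrix of the length-$7$ scheme $X$ of Proposition~\ref{tfae}. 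Since $A$ realizes a Gorenstein algebra as in condition (d) of that proposition, the equivalent condition (a) holds as well: the three linear entries $\ell_1,\ell_2,\ell_3$ of the Hilbert--Burch matrix are linearly independent. This rigidity is what I would exploit throughout.

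I would then split according to the base locus of the linear system $W$. Suppose first that $W$ is base-point free, so that it defines a morphism $\Phi\colon\mathbb P^2\to\mathbb P^3$. Applying Proposition~\ref{hesse} to a general pencil $\mathcal L\subset W$ with base locus $Y$, the assumed failure of WLP puts us in one of two cases: either $\Phi$ is generically one-to-one and $Y$ is a reduced Hesse configuration (case (a)), or $\Phi$ is generically three-to-one and $Y=\Sigma_1\cup\Sigma_2\cup\Sigma_3$ has the collinearity property of case (b), each $\Sigma_i$ imposing a single condition on $W$. In case (a) my approach is to use that $Y$ is a Hesse configuration for \emph{every} general pencil, so that a whole family of such configurations is compatible with the single $4$-dimensional system $W$; linking each two-point subscheme of $Y$ to a length-$7$ scheme on a conic exactly as in the proof of Proposition~\ref{hesse}(a), and combining this with the independence of $\ell_1,\ell_2,\ell_3$, I expect this over-determination to be impossible away from characteristic $3$, with characteristic $3$ dispatched by a direct check using the Gorenstein structure. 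In case (b) the incidence that every line through two of the $\Sigma_i$ meets the third, together with the one-condition property and the self-dual (Gorenstein) resolution, should be converted, again by liaison and a comparison of Hilbert functions, into a contradiction in every characteristic. Thus a base-point free $W$ would force the WLP to hold.

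It remains to treat the case in which $W$ has a base point; this is where the exceptional algebra lives, since $\langle x^2y,x^2z,y^3,z^3\rangle$ has the single base point $[1:0:0]$. Here I would change coordinates to place the base point at $[1:0:0]$, so that every cubic in $W$ has vanishing $x^3$-coefficient, and then use apolarity, writing $A=S/\ann(\langle F\rangle)$ with $\deg F=5$, together with the independence of $\ell_1,\ell_2,\ell_3$, to pin down $F=X^5-XY^2Z^2$ and hence $W=\langle x^2y,x^2z,y^3,z^3\rangle$ with quartic generator $x^4+y^2z^2$, up to a change of variables. At that point the problem reduces to the explicit linear algebra of the matrix of $\times L$ in a fixed monomial basis, precisely in the style of Lemma~\ref{233}: the relevant maximal minor is a nonzero multiple of a quantity that vanishes identically in $L$ exactly when $\operatorname{char} k=3$. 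This yields the stated dichotomy, namely that the WLP holds in all other characteristics and fails only for $I=(x^2y,x^2z,y^3,z^3,x^4+y^2z^2)$ in characteristic $3$.

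The hard part will be case (a). A reduced Hesse configuration genuinely exists over $\mathbb C$ (the nine flexes of a smooth plane cubic), so the characteristic cannot be detected from the configuration alone; the contradiction in characteristic $\neq 3$ must instead come from the rigidity forced by requiring $Y$ to be the base locus of a \emph{general} pencil in the fixed system $W$ whose Hilbert--Burch data are constrained. Turning the statement that every general pencil of $W$ cuts out a Hesse configuration into a usable algebraic obstruction, and separating it cleanly from the genuine characteristic-$3$ degeneration (where the Hesse configuration becomes the affine plane over $\mathbb F_3$), is the delicate core of the argument; by comparison the positive-characteristic minor computation in the base-point case is routine.
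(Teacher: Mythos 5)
Your outline correctly identifies the two structural pillars (Lemma~\ref{rank} reducing everything to $\times L\colon [A]_2\to[A]_3$, and the Buchsbaum--Eisenbud matrix producing the length-$7$ scheme $X$), and your split into ``$W$ base-point free'' versus ``$W$ has a base point'' does correspond to the paper's dichotomy. But there is a genuine logical error at the start: you invoke Proposition~\ref{tfae} in the direction (d)\,$\Rightarrow$\,(a) to conclude that $\ell_1,\ell_2,\ell_3$ are \emph{always} linearly independent. Proposition~\ref{tfae} only applies under the standing hypothesis that $J=I_X+(f)$ with $f$ a \emph{non-zerodivisor} on $S/I_X$, and that hypothesis fails exactly in the case where the exceptional algebra lives: when the linear forms are dependent, the saturation of $(f_1,f_2,f_3,f_4)$ is $(\ell_1,\ell_2)$, the fourth cubic vanishes on a component of $X$, and $B=S/(f_1,\dots,f_4)$ is not even artinian. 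So your opening claim contradicts your own third paragraph (where $W$ has a base point and the $\ell_i$ cannot be independent), and the ``rigidity'' you propose to exploit throughout is simply not available in the case that produces $(x^2y,x^2z,y^3,z^3,x^4+y^2z^2)$. You also tacitly assume the ideal has exactly five generators; the $7\times 7$ Pfaffian case must at least be noted and dispatched (the paper does this by observing the four cubics then define a codimension-$2$ CM scheme).

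Beyond that, the three places where the actual work happens are left as expectations rather than arguments, and in each the paper's mechanism is quite specific. In the generically one-to-one case the contradiction is not an ``over-determination'' count: one specializes the Hesse pencil (a point of $G(2,4)$) to a pencil whose base locus contains $X$ and uses semicontinuity to force a length-$6$ subscheme of $X$ on a conic, contradicting Proposition~\ref{tfae}(b); this works in \emph{every} characteristic, so no separate characteristic-$3$ check is needed there. In the three-to-one case the contradiction is a degree count: after showing $\Phi(\mathbb P^2)$ is a cubic surface that is not a cone with vertex at $\Phi(X)=[0:0:0:1]$, a general secant line through that vertex pulls back to a complete intersection of type $(3,3)$ (length $9$) containing $X$ (length $7$) plus a disjoint length-$3$ fibre --- impossible. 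Finally, in the base-point case your plan to pin down the dual quintic $F$ by apolarity has no mechanism and is not even true as stated: most Gorenstein algebras with dependent $\ell_i$ still have the WLP. The paper instead writes $X=Y\cup Z$ with $Y=V(\ell_1,\ell_2)$ and $Z$ the complete intersection $(q_6,f_3)$, embeds $B$ into $R_Y\oplus R_Z/(f_4)$, and reduces to the WLP for complete intersections of type $(2,3,3)$ (Lemma~\ref{233}); the unique failure $(x^2,y^3,z^3)$ in characteristic $3$ is what propagates to the unique exceptional quintic. Without that reduction your third paragraph has no route to the stated exceptional ideal.
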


\begin{proof}
  Let $A=S/I$ be an artinian Gorenstein algebra with Hilbert function
  $(1,3,6,6,3,1)$.     By \cite[Theorem 2.1]{BE}, the ideal of any
  Gorenstein algebra of codimension $3$ is given by the pfaffians of
  a skew-symmetric matrix. In this case, we either have a $5\times
  5$-matrix, or a $7\times 7$-matrix.  In the latter case, the matrix
  can be written as 
  $$
  \begin{bmatrix}
    P&N\\-N^t&0
  \end{bmatrix}
  $$
  where $P$ is a $4\times 4$-matrix of quadrics and $N$ is a $4\times
  3$-matrix of linear forms. By \cite[Lemma 2.12]{MZ}, the four cubics
  in the ideal $I$  have no GCD of degree $>0$. These four cubics in
  $I$ are given by the maximal minors of $N$ and hence define a
  codimension $2$ Cohen-Macaulay ring. Thus it is clear that the
  multiplication by a non-zerodivisor gives injective multiplication
  also on $A$ up to degree $2$. 

  Now look at the case of a $5\times 5$-matrix. Denote the five
  pfaffians by $f_1,f_2,f_3,f_4,f_5$ in the natural order.   Let
  \begin{equation}\label{BE}
  Q := 
  \begin{bmatrix}
    0 & q_1 & q_2 & q_4 & \ell_1 \\
    -q_1 & 0 & q_3 & q_5 & \ell_2 \\
    -q_2 & -q_3 & 0& q_6 & \ell_3 \\
    -q_4 & -q_5 & -q_6 & 0 & \ell_4 \\
    -\ell_1 & -\ell_2 &-\ell_3 & -\ell_4 & 0 \\
  \end{bmatrix}.
  \end{equation}
  Since we are in a polynomial ring $S$ with only three variables, we
  can eliminate $\ell_4$ by means of row and column operations, keeping
  the matrix skew-symmetric. 
  
  Let $J$ be the ideal defined by the first three cubics. These are the
  maximal minors of the upper right $3\times 2$-block. Hence they define
  a codimension $2$ Cohen-Macaulay algebra of multiplicity $7$ and
  Hilbert function $(1,3,6,7,7,7,\dots)$ unless there is a common factor
  among them. If there were a common factor, $f$, we see from the colon
  ideal $(I\colon f)$ that we get a Gorenstein quotient with three
  forms, $f_1/f,f_2/f,f_3/f$, of degree $1$ or $2$ in the ideal
  $(I\colon f)$. In degree $1$ the  three forms would generate
  everything forcing $(I\colon f)=\mathfrak m$. If we had three
  quadrics, the Hilbert function of $S/(I\colon f)$ would be
  $(1,3,3,3,1)$, but then there would be another linear syzygy among
  the three forms $f_1,f_2,f_3$, contradicting that the matrix $Q$
  gave all the syzygies of the forms $f_1,f_2,f_3,f_4,f_5$. Thus we 
  conclude that we get a zero-dimensional subscheme, $X$, of length $7$
  defined by the ideal $J$.

  If the linear forms $\ell_1,\ell_2,\ell_3$ are linearly
  independent, we have that the quartic generator, $f_5$, satisfies
  $f_5{\mathfrak m}\subseteq (f_1,f_2,f_3,f_4)$. Thus $f_5$ is a socle
  element in $B=S/(f_1,f_2,f_3,f_4)$, so since $A$ is artinian, also $B$
  must be artinian. Suppose that WLP does not hold.  By Proposition~\ref{tfae}, 
 since the linear forms are independent, the length $7$ scheme $X$ contains no subscheme of length $6$ lying on a
  conic.   
  
  On the other hand, let $P_1$ and $P_2$ be general (in particular distinct) points in $\mathbb P^2$.  These points impose independent conditions on the vector space spanned by $f_1,f_2,f_3,f_4$, and so define a pencil, hence a complete intersection, $Y$.  Let 
\[
\Phi = [f_1,f_2,f_3,f_4] : \mathbb P^2 \longrightarrow \mathbb P^3
\]
be defined as in Proposition~\ref{hesse}.  We  assume from now on that the ideal generated by $f_1,f_2,f_3,f_4$ fails the WLP from degree $2$ to degree $3$.
  
Assume first that $\Phi$ is generically one-to-one.  By Proposition~\ref{hesse}, this complete intersection is a reduced Hesse configuration, $\mathcal H$. Thus the line spanned by $P_1$ and $P_2$ contains a third point of $\mathcal H$.  It follows that the residual to $P_1 \cup P_2$ is a length $7$ subscheme with the same Betti numbers as $X$, containing a length $6$ subscheme lying on a conic.  Indeed, the residual to {\em any} length $2$ subscheme of $\mathcal H$ is a length $7$ subscheme containing a subscheme of length $6$ lying on a conic.  
Since the ideal of $\mathcal H$ can be viewed as a point of the Grassmannian $G(2,4)$, we can specialize $\mathcal H$ to a complete intersection containing $X$.  By semicontinuity, $X$ also 
has the property that it contains a length $6$ subscheme lying on a conic.  Contradiction. 

Now assume that $\Phi$ is generically three-to-one.  It follows that $\deg \Phi(\mathbb P^2) = 3$.  If $P$ is a general point of $\mathbb P^2$, the $3$-dimensional subspace of $W$ determined by the vanishing at $P$ in fact also vanishes on a length $3$ subscheme $\Sigma_P$, as described in Proposition~\ref{hesse}.  Now, the ideal of $X$ is generated by three cubics, and we can assume that they are $f_1,f_2,f_3$.  Thus $X$ is the pre-image of the point $[0:0:0:1]$ under $\Phi$.  

We will now prove that $\Phi(\mathbb P^2)$ is not a cone with vertex at the
point $ [0:0:0:1]$.   Suppose that $\Phi(\mathbb P^2)$
were a cone with vertex at $P=[0:0:0:1]$. Then the general
hyperplane through $[0:0:0:1]$ cuts $\Phi(\mathbb P^2)$
in a union of three distinct lines according to Proposition~\ref{hesse}. Since the inverse image $W$ of this hyperplane section is given by a general form $\alpha f_1+ \beta f_2 + \gamma f_3$ of the linear system $\langle f_1,f_2,f_3\rangle$, it has to be a union of three distinct lines in ${\mathbb P}^2$. The intersection between any two of these lines has to map to $[0:0:0:1]$, so it is a point in the support of $X$.  Since there are only finitely many such line configurations meeting at three distinct points of $X$, we would get that the general point of $\Phi(\mathbb P^2)$ had an inverse image lying on the union of a finite number of lines, unless the general hyperplane section gives three lines in ${\mathbb P}^2$ through a single point. However, this means that the general form $\alpha f_1+ \beta f_2 + \gamma f_3$ is a cubic in two linear forms, which contradicts the assumption that there is a linear syzygy among $f_1,f_2,f_3$ involving all three variables $x,y,z$. Thus $\Phi(\mathbb P^2)$ cannot be a cone with vertex at $[0:0:0:1]$. 

Choosing a general point $P \in \Phi(\mathbb P^2)$, the  line joining $P$ and $[0:0:0:1]$ meets $\Phi (\mathbb P^2)$ in zero-dimensional scheme, so the pre-image of this scheme is a complete intersection of type $(3,3)$ containing $X$.  However, the pre-image of $P$ is a length $3$ scheme disjoint from $X$.  Since $X$ has length $7$, this is impossible.  

We conclude that if the linear forms $\ell_1,\ell_2, \ell_3$ in the
Buchsbaum-Eisenbud matrix are linearly independent then $S/I$ must
have the WLP, regardless of the characteristic of the field $k$. 

 Now assume that $\ell_1,\ell_2,\ell_3$ are linearly
  dependent. Then they must span a $2$-dimensional vector space, since a
  syzygy must involve at least two generators. Hence we can assume that
  also $\ell_3=0$. From the skew-symmetric matrix, we can see that
  $\ell_4=\ell _3=0$ implies that $(f_1,f_2,f_3,f_4)\subseteq
  (\ell_1,\ell_2)$. Hence we must have that the saturation of
  $(f_1,f_2,f_3,f_4)$ is equal to $(\ell_1,\ell_2)$, so in particular
  $B=S/(f_1,f_2,f_3,f_4)$ is not artinian. Now $f_5$ is not a socle
  element, but since $(\ell_1,\ell_2)f_5\subseteq (f_1,f_2,f_3,f_4)$, we
  have that the Hilbert function of the ideal $(f_5)$ in $B$ must be
  $(0,0,0,0,1,1,1,\dots)$. Thus the Hilbert function of $B$ must be
  $(1,3,6,6,4,2,1,1,1,\dots)$. Let $Y$ be the subscheme of ${\mathbb
    P}^2$ defined by $(\ell_1,\ell_2)$ and let $X$ be the subscheme of
  ${\mathbb P}^2$ defined by $J=(f_1,f_2,f_3)$. The ideal generated by
  $f_4$ in $R_X=S/J$ has Hilbert function $(0,0,0,1,3,5,6,6,6,\dots)$
  and the cyclic module corresponds to the coordinate ring of a complete
  intersection defined by $(q_6,f_3)$. Denote this subscheme by $Z$. We
  now have that $X=Z\cup Y$ as subschemes and we have an inclusion 
  $$
  R_X \longrightarrow R_Y \oplus R_Z
  $$
  which is an isomorphism from degree $2$ and higher because the
  Hilbert functions agree in these degrees. The form $f_4$ is zero on
  $R_Y$ and hence a non-zerodivisor on $R_Z$. Thus we get an
  injection 
  $$
  B \longrightarrow R_Y \oplus R_Z/(f_4).
  $$
  By Lemma~\ref{233}, $R_Z/(f_4)$ has the WLP except when the
  characteristic of $k$ is $3$ and after change of variables,
  $(q_6,f_3,f_4) = (x^2,y^3,z^3)$.  Clearly $R_Y$ always has the WLP.  Thus $B$, and hence $A$, has the WLP except when  the characteristic of $k$ is $3$ and after change of variables, $(q_6,f_3,f_4) = (x^2,y^3,z^3)$.
 
Since the cubics in the Gorenstein ideal are given by $\ell_1 q_6,\ell_2 q_6,\ell_1q_3-\ell_2q_2,\ell_1q_5-\ell_2q_4$, we have to have $q_6=x^2$ and $(\ell_1,\ell_2)=(y,z)$. This means that the cubics in the ideal are $x^2y,x^2z,y^3,z^3$.  We can find the quartic generator by computing the appropriate Pfaffian from (\ref{BE}), using $q_3=y^2, q_4=z^2, q_2=0,q_5=0,q_1=x^2$, and we obtain $f_5 = x^4+y^2z^2$ as desired.  Indeed, a direct calculation shows that this fails the WLP in characteristic $3$.
\end{proof}

The proof of the above theorem shows that in the general case (when the skew-symmetric matrix has size $5 \times 5$), three of the four minimal generators of degree $3$ can be chosen as the saturated ideal of a zero-dimensional scheme of length $7$.  The question of whether $\ell_1, \ell_2, \ell_3$ are independent is not related to whether or not a suitable Gorenstein algebra exists (since we assume that we begin with a Gorenstein algebra), but rather whether the degree $3$ component is artinian (see also Proposition~\ref{tfae}). That is, in this situation the general behavior is for the fourth cubic generator to be a non-zerodivisor on the coordinate ring of this length $7$ scheme. In the next example we show that in either case, this length $7$ scheme can even be curvilinear (i.e. lying on a smooth curve) and supported at a point.

\begin{example} \label{non red}
First consider the codimension $3$ Gorenstein ideal generated by the Pfaffians
of the following matrix: 
\[
\left [
  \begin{array}{cccccc}
    0  &        0 &      0  &    x^2 &  y \\
    0  &        0  &   x^2 &   y^2 &  z \\
    0   &    -x^2 &   0  &    z^2 &  0 \\
    -x^2 & -y^2 &  -z^2  &   0 &   0 \\
    -y &    -z &    0  &      0 &   0 
  \end{array}
\right ]
\]
The length $7$ scheme is generated by the maximal minors of  the matrix
\[
\left [
\begin{array}{cccc}
x^2 &  y \\
y^2  & z \\
z^2 &  0
\end{array}
\right ].
\]
The ideal is $( x^2z - y^3, yz^2, z^3 )$.  It is clearly supported at
the point $[1:0:0]$.  However, notice that the first generator is
smooth at this point, hence this is a curvilinear scheme. 

On the other hand, using a computer algebra program, one can verify that if $P$ is a general point on a general cubic curve in $\mathbb P^2$, then the divisor $7P$ defines a curvilinear zero-dimensional scheme supported at $P$ with the desired Betti numbers, having three independent linear forms. 
\end{example}

\smallskip

We now return to the question of whether the situation in part (a) of Proposition \ref{hesse} can occur (see Remark \ref{exist?}).  We summarize our answer:

\begin{itemize}
\item[(a)] If $\hbox{char}(k) \neq 3$ then  under the assumption that WLP fails, $\Phi$ is never generically one-to-one.

\item[(b)] If $\hbox{char}(k) = 3$ {\em and} $I$ is the degree 3 part of a Gorenstein ideal with Hilbert function $(1,3,6,6,3,1)$ then again  under the assumption that WLP fails, $\Phi$ is never generically one-to-one.

\item[(c)] If $\hbox{char} (k) = 3$ and $I$ is {\em not} the degree 3 part of a Gorenstein ideal with Hilbert function $(1,3,6,6,3,1)$ then under the assumption that WLP fails, we do not know if $\Phi$ can be generically one-to-one or not, but we believe not.
\end{itemize}

Assertion (b) is just the conclusion of Theorem \ref{MainThm},
together with the observation that the cubics in the ideal given in
the statement of the theorem do not generate an artinian ideal nor
do they define a generically one-to-one morphism.  Assertion (c) needs no comment.

Thus we have only to show assertion (a).  We will use the term
\emph{Hesse pencil} for a pencil of cubics whose base locus is a Hesse
configuration, which is slightly different than the use in \cite{AD}. 

\begin{proposition} \label{AD prop}
Fix the variables $x,y,z$.  Assume $\hbox{char}(k) \neq 3$.  Let $H$
be a reduced Hesse pencil containing the polynomial $xyz$.  Then
$H=\langle ax^3 + by^3 + cz^3, xyz\rangle$ for some non-zero constants $a,b,c$. 
\end{proposition}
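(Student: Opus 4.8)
The plan is to translate the Hesse collinearity property into algebraic vanishing conditions on the coefficients of a second generator of the pencil. Write the base locus $Y$ of $H$ as the complete intersection $V(xyz)\cap V(g)$, where $g$ is any cubic generating $H$ together with $xyz$; since only the pencil matters, $g$ is determined modulo $xyz$, so I am free to drop its $xyz$-coefficient and write
\[
g = ax^3+by^3+cz^3 + p\,x^2y+q\,x^2z+r\,xy^2+s\,y^2z+t\,xz^2+u\,yz^2 \pmod{xyz}.
\]
The goal is then exactly to show $p=q=r=s=t=u=0$ with $a,b,c\neq 0$.

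First I would analyze the base locus. Since $V(xyz)$ is the union of the three coordinate lines and $Y$ is a reduced complete intersection of two cubics (nine distinct points), $g$ restricted to each coordinate line is a nonzero binary cubic cutting out three of the nine points on that line. I would then rule out that any point of $Y$ is a vertex $[1:0:0],[0:1:0],[0:0:1]$: at such a vertex $V(xyz)$ is singular, so if $g$ passed through it the local intersection multiplicity would be at least $2$, contradicting reducedness. Hence the vertices are avoided, which already forces $a,b,c\neq 0$, and the nine points split as exactly three on each coordinate line.

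Next I would parametrize $A_i=[0:1:\alpha_i]$ on $x=0$, $B_j=[\beta_j:0:1]$ on $y=0$, and $C_k=[1:\gamma_k:0]$ on $z=0$ (all coordinates nonzero, and the $\alpha_i$, resp.\ $\beta_j,\gamma_k$, distinct). A direct determinant computation gives that $A_i,B_j,C_k$ are collinear if and only if $\alpha_i\beta_j\gamma_k=-1$. Now I invoke the Hesse property: for each pair $(i,j)$ the line $\overline{A_iB_j}$ (which is none of the coordinate lines, since the vertices are excluded) contains a third point of $Y$, necessarily some $C_{k(i,j)}$. A short argument shows that for fixed $i$ the indices $k(i,1),k(i,2),k(i,3)$ are distinct, so $k(i,\cdot)$ is a permutation, i.e.\ the incidences form a Latin square. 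Taking the product of $\alpha_i\beta_j\gamma_{k(i,j)}=-1$ over $j=1,2,3$ for fixed $i$ then yields $\alpha_i^3\,(\beta_1\beta_2\beta_3)(\gamma_1\gamma_2\gamma_3)=-1$, whose right-hand side is independent of $i$; hence $\alpha_1^3=\alpha_2^3=\alpha_3^3$, and symmetrically for the $\beta_j$ and the $\gamma_k$.

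Finally I would feed this back into the restrictions of $g$. On $x=0$ the binary cubic $g(0,y,z)=by^3+s\,y^2z+u\,yz^2+cz^3$ has as its three distinct roots the ratios $z/y=\alpha_i$; since the $\alpha_i$ are three distinct cube roots of a common value $\rho$, they are exactly the roots of $t^3-\rho$, so the cubic must be proportional to $z^3-\rho\,y^3$, forcing $s=u=0$. The analogous computations on $y=0$ and $z=0$ give $q=t=0$ and $p=r=0$, so all mixed terms vanish and $H=\langle ax^3+by^3+cz^3,\;xyz\rangle$ with $a,b,c\neq 0$. This last step is exactly where $\mathrm{char}(k)\neq 3$ is essential: only then does $t^3-\rho$ have three distinct roots, so that three distinct $\alpha_i$ with equal cubes can exist at all (in characteristic $3$ it is a perfect cube, and the configuration could not be reduced). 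I expect the main obstacle to be the bookkeeping of the preceding paragraph, namely extracting the multiplicative relation $\alpha_i^3=\alpha_j^3$ from the collinearity condition, which hinges on correctly establishing the Latin-square (permutation) structure of the incidences and on the prior reduction excluding the vertices.
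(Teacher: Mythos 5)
Your proof is correct, but it takes a genuinely different route from the paper: the paper's entire proof of this proposition is a citation of the discussion in Section~2 of Artebani--Dolgachev, whereas you give a self-contained elementary argument. Your chain of deductions checks out: reducedness of the complete intersection forces the base locus to avoid the vertices of the coordinate triangle (intersection multiplicity $\ge 2$ at a singular point of $V(xyz)$), which gives $a,b,c\neq 0$ and the $3+3+3$ splitting; the determinant computation does give $1+\alpha_i\beta_j\gamma_k=0$ as the collinearity condition; the injectivity of $j\mapsto k(i,j)$ follows either from your cancellation of $\alpha_i\gamma_k$ or from the fact that two distinct lines through $A_i$ cannot share a second point; and multiplying the nine relations three at a time yields $\alpha_i^3=\rho$ for all $i$, whence $g(0,1,t)=c(t^3-\rho)$ and $s=u=0$, etc. Two small points are worth a sentence each if you write this up: (i) you should say explicitly why the third point of $Y$ on $\overline{A_iB_j}$ is a $C_k$ --- the line meets $x=0$ only at $A_i$ and $y=0$ only at $B_j$ (otherwise it would be a coordinate line), so the third point, lying on $V(xyz)$, must sit on $z=0$ and is not a vertex; (ii) your closing remark slightly misplaces the role of $\mathrm{char}(k)\neq 3$: the factorization step $ct^3+ut^2+st+b=c(t^3-\rho)$ is valid in any characteristic once three distinct roots with equal cubes are in hand, and what actually happens in characteristic $3$ is that your argument produces a contradiction (Frobenius is injective, so distinct elements cannot share a cube), showing that no reduced Hesse pencil contains $xyz$ at all; the hypothesis is thus what makes the statement non-vacuous rather than an ingredient of the final step. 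What your approach buys is independence from the external reference and an explicit byproduct (the characteristic-$3$ non-existence statement); what the citation buys the paper is brevity and a connection to the classical normal form $\lambda(x^3+y^3+z^3)+\mu xyz$ of the Hesse pencil.
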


\begin{proof}
This follows from the discussion \cite[Section 2]{AD}.
\end{proof}

\begin{proposition} \label{answer ref 2}
Let $f_1, f_2, f_3, f_4$ be independent forms of degree $3$ defining an
artinian ideal $J$ and assume $S/J$ fails WLP from degree $2$ to
degree $3$. Assume $\hbox{char}(k) \neq 3$.  Then $\Phi$ is not
generically one-to-one.  
\end{proposition}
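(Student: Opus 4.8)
The plan is to argue by contradiction: assume that $\Phi$ is generically one-to-one, and exploit the rigidity of Hesse pencils in characteristic $\neq 3$ (Proposition~\ref{AD prop}) to force $W=\langle f_1,f_2,f_3,f_4\rangle$ into the very specific shape $\langle x^3,y^3,z^3,xyz\rangle$, whose associated morphism is generically three-to-one by Remark~\ref{exist?} --- the desired contradiction. First I would record what the hypotheses provide. Since $S/J$ fails the WLP from degree $2$ to degree $3$ and $\Phi$ is generically one-to-one, Proposition~\ref{hesse}(a) applies and shows that the base locus of a \emph{general} pencil inside $W$ is a reduced Hesse configuration; equivalently, a general pencil $\mathcal L\subseteq W$ is a reduced Hesse pencil. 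Fix one such $\mathcal L_0\subseteq W$. A reduced Hesse configuration carries its twelve lines grouped into four triangles, and these four triangles are exactly the four singular members of $\mathcal L_0$; in particular $\mathcal L_0$ contains a cubic that is a product of three distinct linear forms. After a linear change of the variables $x,y,z$ --- which alters neither the hypothesis that $S/J$ fails the WLP nor whether $\Phi$ is generically one-to-one --- I may assume this triangle is $xyz$, so that $xyz\in W$.

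Next I would pin down the reduced Hesse pencils of $W$ passing through $xyz$. Writing $xyz=\sum a_i f_i$, the pencils of $W$ containing $xyz$ correspond to the lines $\lambda$ lying in the fixed hyperplane $H_0=\{\sum a_i X_i=0\}\subseteq\mathbb P^3$, so they form a $\mathbb P^2=\mathbb P(W/\langle xyz\rangle)$, and the base locus of $\langle xyz,g\rangle$ is $\Phi^{-1}(\lambda\cap C_0)$ where $C_0=H_0\cap\Phi(\mathbb P^2)$ is a plane curve of degree $9$. I claim a general such pencil is again a reduced Hesse pencil. Indeed, the line $\lambda_0$ corresponding to $\mathcal L_0$ meets $C_0$ in nine distinct reduced points (the reduced Hesse configuration); but a multiple component of $C_0$ would force \emph{every} line of $H_0$ to meet $C_0$ non-reducedly, so $C_0$ must be reduced. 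Hence a general line in $H_0$ meets $C_0$ transversally in nine distinct points over which $\Phi$ is one-to-one, and the argument of Proposition~\ref{hesse}(a) applies verbatim to show that a general pencil through $xyz$ has reduced Hesse base locus. For each such pencil, Proposition~\ref{AD prop} forces $\langle xyz,g\rangle=\langle ax^3+by^3+cz^3,xyz\rangle$, whence $g\in\langle x^3,y^3,z^3,xyz\rangle$. Since the admissible $g$ range over a dense subset of $\mathbb P(W/\langle xyz\rangle)$, they span $W/\langle xyz\rangle$, and comparing the two $4$-dimensional spaces yields $W=\langle x^3,y^3,z^3,xyz\rangle$.

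Finally, for $W=\langle x^3,y^3,z^3,xyz\rangle$ the morphism $\Phi$ is generically three-to-one by Remark~\ref{exist?}, contradicting the assumption that it is generically one-to-one; this completes the proof. The main obstacle is the middle step: Proposition~\ref{hesse}(a) is stated only for \emph{general} pencils, whereas the pencils I must feed into Proposition~\ref{AD prop} are constrained to contain the fixed member $xyz$ and hence form a codimension-two subfamily of the Grassmannian of all pencils. Showing that a general member of this constrained family is still a reduced Hesse pencil --- which I reduce above to the reducedness of the hyperplane section $C_0$, in turn forced by the existence of the single good pencil $\mathcal L_0$ --- is the crux of the argument.
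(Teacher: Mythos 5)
Your proof is correct and follows essentially the same strategy as the paper's: assume $\Phi$ is generically one-to-one, use Proposition~\ref{hesse}(a) together with Proposition~\ref{AD prop} to force $W=\langle x^3,y^3,z^3,xyz\rangle$, and derive a contradiction since that linear system defines a generically three-to-one map. The only difference is technical: where you verify that general pencils through $xyz$ remain reduced Hesse pencils by showing the hyperplane section $C_0$ of $\Phi(\mathbb P^2)$ is reduced, the paper instead arranges the base points to avoid the curve in $\mathbb P^2$ where $\Phi$ fails to be injective --- the same idea viewed from the source rather than the target.
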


\begin{proof}
Suppose it were.  Then  there is at most a curve, $C$ in $\mathbb P^2$
where $\Phi$ fails to be one-to-one.  By Proposition \ref{hesse} (a),
a general pencil $\langle g_1,g_2\rangle$ in $\langle f_1,f_2,f_3,f_4
\rangle$ is a reduced Hesse pencil $H_1$.  Since the pencil is
general, we can assume that none of the base points of $H_1$ lies on
$C$.  (Choose $g_1$ and then choose $g_2$ avoiding the points of
$V(g_1) \cap C$.)  So the nine base points of $H_1$ go to nine
different points under $\Phi$. 

By Proposition \ref{AD prop} we can write $g_1 = x^3 + y^3 + z^3$ and
$g_2 = xyz$ by changing variables.  Since none of the base points of $H_1$
lie on $C$, neither $x, y$ nor $z$ defines a component of $C$.  So for a
general choice of $g_3$ in the vector space $\langle
f_1,f_2,f_3,f_4\rangle$ we have that $(g_3,xyz)$ gives a reduced 
complete intersection all of whose points avoid $C$.  Thus this
complete intersection is also a reduced Hesse pencil, $H_2$, and by
Proposition \ref{AD prop} we have $H_2 = \langle xyz, a_2 x^3+b_2
y^3+c_2 z^3 \rangle$.  Doing this once more we produce one more
reduced Hesse pencil $H_3=\langle xyz, a_3
x^3+b_3 y^3+c_3 z^3\rangle$.   By the general choice of the $g_i$, the three
cubics $g_1, g_2, g_3$ are linearly independent.  Thus taking linear
combinations, we get that  $\langle f_1,f_2,f_3,f_4 \rangle = \langle
x^3,y^3,z^3,xyz \rangle$.  Thus $\Phi$ is not generically one-to-one.
Contradiction. 
\end{proof}

Putting together some of the previous results, we have at once:

\begin{corollary}\label{6} In characteristic zero, all codimension $3$ artinian Gorenstein algebras of socle degree at most $6$ enjoy the WLP.
\end{corollary}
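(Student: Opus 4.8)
The plan is to combine the reduction to the compressed odd socle degree case with the resolution of the first open Hilbert function, since almost all the real work has already been carried out. Because we are in characteristic zero, the exceptional ideal appearing in Theorem~\ref{MainThm} never occurs, so that theorem gives the WLP unconditionally for the Hilbert function $(1,3,6,6,3,1)$, and the corollary becomes an assembly of earlier statements.

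First I would dispose of the reduction to small compressed cases. Applying Corollary~\ref{reductiontocompressedodd} with $r=3$, to establish the WLP for every codimension $3$ artinian Gorenstein algebra of \emph{even} socle degree $e\le 6$ it suffices to verify it for all compressed codimension $\le 3$ Gorenstein algebras of odd socle degree $e-1\le 5$. For the odd socle degrees themselves, Lemma~\ref{lemma:odd} reduces the WLP for an arbitrary Gorenstein algebra of odd socle degree $e\le 5$ to the WLP for some \emph{compressed} Gorenstein algebra of the same socle degree. Thus the whole statement reduces to checking the WLP for the compressed codimension $\le 3$ Gorenstein algebras of socle degree $1$, $3$, and $5$.

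Next I would identify these compressed Hilbert functions explicitly. In three variables $\dim_k S_i=\binom{i+2}{2}$, so a compressed Gorenstein algebra of codimension $3$ and socle degree $e$ has $h_i=\min\{\binom{i+2}{2},\binom{e-i+2}{2}\}$. For $e=1$ this forces codimension $\le 1$ and the trivial $h$-vector $(1,1)$; for $e=3$ it gives $(1,3,3,1)$; and for $e=5$ it gives exactly $(1,3,6,6,3,1)$, the Hilbert function treated in Theorem~\ref{MainThm}. Any compressed algebra of codimension $\le 2$ produced along the way is covered directly by the codimension $\le 2$ theory.

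Finally I would invoke the known results for each of the three cases: the $h$-vectors $(1)$ and $(1,1)$, together with any codimension $\le 2$ situation, have the WLP by \cite{HMNW,MZ0}; the case $(1,3,3,1)$ has the WLP by \cite{MZ}; and the case $(1,3,6,6,3,1)$ has the WLP by Theorem~\ref{MainThm}, because in characteristic zero the forbidden ideal of that theorem does not arise. Feeding these three facts back through Lemma~\ref{lemma:odd} and Corollary~\ref{reductiontocompressedodd} yields the WLP for every codimension $3$ artinian Gorenstein algebra of socle degree at most $6$. The only genuine content is Theorem~\ref{MainThm}; the remaining step is the bookkeeping of matching each socle degree to its compressed $h$-vector and recognizing that the compressed socle degree $5$ case is precisely that theorem's Hilbert function, so I expect no serious obstacle beyond keeping the socle-degree parities straight.
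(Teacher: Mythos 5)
Your proof is correct and follows essentially the same route as the paper: reduce via Lemma~\ref{lemma:odd} and Corollary~\ref{reductiontocompressedodd} to the compressed odd socle degree cases, identify these as $(1,3,3,1)$ (handled by \cite{MZ}) and $(1,3,6,6,3,1)$ (handled by Theorem~\ref{MainThm}, whose exceptional ideal only occurs in characteristic $3$). The paper's own proof is just a more compressed version of the same bookkeeping.
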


\begin{proof}
From Theorem~\ref{MainThm}, Lemma~\ref{lemma:odd}, and Corollary~\ref{reductiontocompressedodd}, we  immediately obtain the  result for socle degrees $5$ and $6$. Since the corollary is known to hold true in socle degree $3$ (see \cite[Corollary 3.2]{MZ} or \cite[Theorem 3.3]{BZ}), by Corollary~\ref{reductiontocompressedodd} it also   follows for socle degree $4$, and the proof is complete.
\end{proof}
 
\begin{corollary}\label{cor:SLP} In characteristic zero, all codimension
  $3$ artinian Gorenstein algebras of socle degree at most $5$ have the
  SLP. 
\end{corollary}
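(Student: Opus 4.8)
The plan is to reduce the Strong Lefschetz Property to a short list of ``central'' multiplication maps and then to dispatch each of them, using the Weak Lefschetz results already established (Theorem~\ref{MainThm} and Corollary~\ref{6}) together with Gorenstein (Macaulay) duality. First I would fix a codimension $3$ Gorenstein algebra $A=S/I$ of socle degree $e\le 5$, so its $h$-vector is symmetric, and recall that for a general $L$ the pairing $[A]_i\times[A]_{e-i}\to[A]_e\cong k$ is perfect. Under this pairing the map $\times L^{j-i}\colon[A]_i\to[A]_j$ is the transpose of $\times L^{j-i}\colon[A]_{e-j}\to[A]_{e-i}$; hence it has maximal rank as soon as its dual partner does, and it suffices to treat maps whose image lies weakly below the middle, together with the self-dual central maps $\times L^{e-2i}\colon[A]_i\to[A]_{e-i}$ with $2i<e$ (for which maximal rank means bijectivity). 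The consecutive maps $\times L\colon[A]_i\to[A]_{i+1}$ are exactly the WLP, which holds by Corollary~\ref{6}; in particular the middle multiplication $\times L\colon[A]_{\lfloor e/2\rfloor}\to[A]_{\lceil e/2\rceil}$ is bijective. Every non-central higher power $\times L^{d}$ with $d\ge 2$ then factors as a composition of consecutive maps passing through this bijective middle one, hence is injective or surjective as needed. This reduces the statement to the self-dual central maps.

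Among the central maps, the top one $\times L^{e}\colon[A]_0\to[A]_e$ is just the nonvanishing of $L^e$ in $A$, which holds for general $L$ in characteristic zero since the $e$-th powers of linear forms span $S_e$ while $[I]_e\ne S_e$. For $e\le 3$ this is the only central map beyond the WLP one, so the SLP follows immediately; this settles socle degrees $2$ and $3$ (the latter also known from \cite{MZ}). The one remaining map, occurring for $e=4$ and $e=5$, is $\mu_L:=\times L^{e-2}\colon[A]_1\to[A]_{e-1}$, where source and target both have dimension $3$. Writing $A=S/\ann(\langle F\rangle)$ with $\deg F=e$ and letting $\circ$ denote the apolarity action of $S$ on the inverse system, the perfect pairing identifies $\mu_L$ with the symmetric bilinear form $(u,v)\mapsto (uv)\circ(L^{e-2}\circ F)$ on $[A]_1=S_1$. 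Thus $\mu_L$ is bijective precisely when the polar quadric $Q_L:=L^{e-2}\circ F$ is nondegenerate (has rank $3$); equivalently, $\mu_L$ is injective exactly when $(\operatorname{ann}(\langle F\rangle):L^{e-2})_1=0$.

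The crux, and the step I expect to be the main obstacle, is to show that for \emph{every} $F$ whose apolar algebra has the prescribed Hilbert function, a general $L$ makes $Q_L$ nondegenerate. Here I want to stress that the WLP is genuinely not enough: a Jordan-type analysis of $\times L$ shows that the consecutive maximal-rank conditions of Corollary~\ref{6} determine the occupation numbers and the consecutive ranks of $\times L$ but not the lengths of the Jordan strings, and the bijectivity of $\mu_L$ is precisely the extra requirement that the strings starting in degrees $0,1$ reach degree $e-1$. The condition ``$Q_L$ degenerate for all $L$'' is closed in $F$, so the task is to prove that this bad locus is \emph{empty} (semicontinuity, giving only the general $F$, is insufficient). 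I would attack this with the structural input that drives the rest of the paper: the Buchsbaum--Eisenbud/Hilbert--Burch description of $I$ forces $[I]_1=0$ and pins down the degree $3$ and degree $4$ pieces of $I$ tightly, and an identically degenerate family $\{L^{e-2}\circ F\}$ would produce a fixed nonzero $u\in S_1$ with $uL^{e-2}\in I$ for a general $L$. For $e=4$ this should contradict the tight interplay of the single quadric/cubic generators; for $e=5$ it should contradict the morphism-and-Hesse-configuration analysis behind Theorem~\ref{MainThm}, after noting that the excluded characteristic-$3$ exception does not arise over a field of characteristic zero. One may further simplify by separating the boundary value $h_2=3$, where the relevant kernel $\ker(\times L\colon[A]_{e-1}\to[A]_e)$ already vanishes and the injectivity of $\mu_L$ is formal.

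Finally, assembling the cases $e\le 3$ (immediate from WLP) with $e\in\{4,5\}$ (the central map handled as above), one obtains the SLP for all codimension $3$ Gorenstein algebras of socle degree at most $5$, which is the assertion of Corollary~\ref{cor:SLP}. Throughout, characteristic zero is used in two essential places: to guarantee $L^e\neq 0$ for general $L$ via the spanning of $S_e$ by powers, and to invoke the unconditional WLP of Corollary~\ref{6} and Theorem~\ref{MainThm} without the characteristic-$3$ exception.
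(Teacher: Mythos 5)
Your reduction to the self-dual central maps is exactly the paper's (duality plus the WLP of Corollary~\ref{6} handles everything except $\times L^{e}\colon [A]_0\to[A]_e$ and $\times L^{e-2}\colon [A]_1\to[A]_{e-1}$, and the first of these is dispatched the same way in both arguments, by noting that not all $e$-th powers of linear forms can lie in the ideal). You also correctly identify the crux: bijectivity of $\mu_L=\times L^{e-2}\colon [A]_1\to[A]_{e-1}$ for general $L$, which via apolarity is the nondegeneracy of the polar quadric $L^{e-2}\circ F$, i.e.\ the nonvanishing of the Hessian of $F$. But at precisely this point your proposal stops proving and starts hoping. The assertion that an identically degenerate family $\{L^{e-2}\circ F\}$ ``would produce a \emph{fixed} nonzero $u\in S_1$ with $uL^{e-2}\in I$ for general $L$'' is unjustified: the kernel of $Q_L$ varies with $L$ in general, and the claim that one can nevertheless extract a common kernel vector (equivalently, that $F$ is a cone) is not a formal consequence of anything in the Buchsbaum--Eisenbud setup --- it is exactly the content of the Gordan--Noether theorem for ternary forms with identically vanishing Hessian, which the paper cites (\cite{gordan-noether}, \cite[Theorem~30]{maeno-watanabe}). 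That theorem is genuinely nontrivial and characteristic-sensitive, and it \emph{fails} for forms in five or more variables (the Gordan--Noether counterexamples to Hesse's claim), so no soft semicontinuity or linear-algebra argument can substitute for it.

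Your proposed fallbacks do not close the gap either: the Hesse-configuration and morphism analysis behind Theorem~\ref{MainThm} governs the map $[A]_2\to[A]_3$ (the WLP range), not $[A]_1\to[A]_4$, and there is no indicated mechanism by which degeneracy of all polar quadrics of $F$ would feed into that analysis. So the proposal is a correct skeleton with the load-bearing step missing; inserting the Gordan--Noether result (as the paper does, with the same degree shift for $e=4$) is what completes it.
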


\begin{proof}
  For socle degree $5$,  we only have to verify the maximal rank in the remaining two symmetric cases $\times
 L^3\colon [A]_1\longrightarrow [A]_4$ and $\times
 L^5\colon [A]_0\longrightarrow [A]_5$. The latter is bijective since not all fifth powers of linear forms can be in the
 ideal. 

 The first is bijective due to a result by Gordan and Noether
 stating that a ternary form with vanishing Hessian has to reduce to a
 binary form after a linear change of coordinates (see
 \cite{gordan-noether} and \cite[Theorem~30]{maeno-watanabe}). For
 lower socle degrees, the argument is exactly the same with a shift of
 degrees. 
\end{proof}

\begin{remark}\label{Partition}
  An even finer study of the multiplications by powers of linear forms
  include the various Jordan partitions that we may get for
  multiplication by linear forms on the artinian algebra considered as
  a $k$-vector space. In particular, we look at the case where the WLP
  fails for artinian Gorenstein algebras with $h$-vector
  $(1,3,6,6,3,1)$. In this case, the ideal after a change of variables
  is given by $I=(x^2y,x^2z,y^3,z^3,x^4+y^2z^2)$. We get partition $(6,3,3,3,3,1,1)$ for a general
  linear form. There are only two other possibilities, one for
  multiplication by $x$ where we get $(6,2,2,2,2,2,2,2)$ and one for
  multiplication by $ay+bz$ where we get $(3,3,3,3,3,3,1,1)$. 

  In characteristic zero, a general linear form will have the
  partition $(6,4,4,2,2,2)$, because of the SLP.
\end{remark}

\begin{remark}\label{linkage}
  In our proof of the main theorem, the length $7$ subscheme
  obtained from a submatrix of the Buchsbaum-Eisenbud matrix played a
  central role. In fact, we can in general eliminate even further and
  get the matrix to be of the form 
$$
 \begin{bmatrix}
    0 & 0 & q_2 & q_4 & \ell_1 \\
    0 & 0 & q_3 & q_5 & \ell_2 \\
    -q_2 & -q_3 & 0& q_6 & \ell_3 \\
    -q_4 & -q_5 & -q_6 & 0 & 0 \\
    -\ell_1 & -\ell_2 &-\ell_3 & 0 & 0 \\
  \end{bmatrix}.$$
  In this way, we see that the length $7$ subscheme $X$ is linked to
  a length $8$ subscheme $Y$ defined by the maximal minors of the
  matrix
  $$
  \begin{bmatrix}
  q_2 & q_4 & \ell_1 \\
  q_3 & q_5 & \ell_2 \\
  \end{bmatrix}
  $$
 via the complete intersection of type $(3,5)$ given by the common
 cubic and the determinant of the submatrix 
$$
 \begin{bmatrix}
  q_2 & q_4 & \ell_1 \\
  q_3 & q_5 & \ell_2 \\
  0& q_6 & \ell_3 \\
\end{bmatrix}.$$
This shows that all artinian Gorenstein algebras with Hilbert function
$(1,3,6,6,3,1)$ and five minimal generators can be obtained as
$S/(I_X+I_Y)$ where $X$ and $Y$ are as described above. 
\end{remark}


 \section*{Acknowledgements}    The authors would like to thank the
 University of Notre Dame, CIRM (Trento) and CRM (Barcelona) for kind
 hospitality and generous support. In the course of writing this
 paper, the authors made use of the computer programs Macaulay2
 \cite{macaulay2} and \cocoa \cite{cocoa}. The picture in
 Remark~\ref{hesse picture} was obtained from {\tt
   wikipedia.org}. The authors thank David Cook II, Brian Harbourne, Anthony
 Iarrobino and Junzo Watanabe for useful discussions on the topic of
 this paper.

%
%

\bibliographystyle{abbrv}

\end{document}